\theoremstyle{definition}
\newtheorem{theorem}{Theorem}[section] 
\newtheorem{definition}[theorem]{Definition}
\newtheorem{conjecture}[theorem]{Conjecture}
\newtheorem{lemma}[theorem]{Lemma}
\newtheorem{proposition}[theorem]{Proposition}
\newtheorem{corollary}[theorem]{Corollary}
\newtheorem*{theorem*}{Theorem}
\theoremstyle{remark}
\newtheorem{remark}[theorem]{Remark}
\newtheorem{example}[theorem]{Example}
\begin{document}

\title{Logarithmic concavity for morphisms of matroids}

\author{Christopher Eur and June Huh}

\address{University of California at Berkeley, Berkeley, CA, USA.}
\email{ceur@math.berkeley.edu}

\address{Institute for Advanced Study, Princeton, NJ, USA.}
\email{junehuh@ias.edu}

\maketitle

\begin{abstract}
Morphisms of matroids are combinatorial abstractions of linear maps and graph homomorphisms.  We introduce the notion of a basis for morphisms of matroids, and show that its generating function is strongly log-concave.  As a consequence, we obtain a generalization of Mason's conjecture on the $f$-vectors of independent subsets of matroids to arbitrary morphisms of matroids.  To establish this, we define multivariate Tutte polynomials of morphisms of matroids, and show that they are Lorentzian in the sense of \cite{BH19} for sufficiently small positive parameters.
\end{abstract}

\section{Introduction}

A \emph{matroid} $\mathrm{M}$ on a finite set $E$ is defined by its rank function $\operatorname{rk}_\mathrm{M}: 2^E \to \mathbb{N}$
satisfying the following conditions:
\begin{enumerate}[(1)]\itemsep 5pt
\item
For any $S \subseteq E$, we have $\operatorname{rk}_\mathrm{M}(S) \le |S|$.
\item For any  $S_1 \subseteq S_2  \subseteq E$,  we have
$\operatorname{rk}_\mathrm{M}(S_1) \le  \operatorname{rk}_\mathrm{M}(S_2)$.
\item For any  $S_1\subseteq E$, $S_2 \subseteq E$, we have
$\operatorname{rk}_\mathrm{M}(S_1 \cup S_2)+ \operatorname{rk}_\mathrm{M}(S_1\cap S_2) \le  \operatorname{rk}_\mathrm{M}(S_1)+ \operatorname{rk}_\mathrm{M}(S_2)$.
\end{enumerate}
A subset $S \subseteq E$ is an \emph{independent set} of $\mathrm{M}$ if $\operatorname{rk}_\mathrm{M}(S) = |S|$, and
a \emph{spanning set} of $\mathrm{M}$  if $ \operatorname{rk}_\mathrm{M}(S) = \operatorname{rk}_\mathrm{M}(E)$.
A \emph{basis} of $\mathrm{M}$ is a subset that is both independent and spanning,
and a \emph{circuit} of $\mathrm{M}$ is a subset that is minimal among those not in any basis of $\mathrm{M}$.

\begin{definition}
Let $\mathrm{M}$ and $\mathrm{N}$ be matroids on ground sets $E$ and $F$.
 A \emph{morphism} $f:\mathrm{M} \to  \mathrm{N}$ is a function from $E$ to $F$ that satisfies
\[
 \operatorname{rk}_{\mathrm{N}}(f(S_2)) -\operatorname{rk}_{\mathrm{N}}(f(S_1)) \le \operatorname{rk}_{\mathrm{M}}(S_2) -\operatorname{rk}_{\mathrm{M}}(S_1)  \ \  \text{for any $S_1\subseteq S_2 \subseteq E$.}
\]
A subset $S \subseteq E$ is a \emph{basis} of $f$ if $S$ is contained in a basis of $\mathrm{M}$ and $f(S)$ contains a basis of $\mathrm{N}$.  
\end{definition}


The category $\mathsf{Mat}$ consists of matroids with morphisms as defined above.\footnote{Morphisms are closely related to  strong maps \cite[Chapter 17]{Wel76}: A strong map from $\mathrm{M}$ to $\mathrm{N}$ is a morphism from $\mathrm{M} \oplus \mathrm{U}_{0,1}$ to $\mathrm{N} \oplus \mathrm{U}_{0,1}$.}  
The initial object of  $\mathsf{Mat}$  is $\mathrm{U}_{0,0}$, the unique matroid on the empty set. 
The terminal object of $\mathsf{Mat}$ is $\mathrm{U}_{0,1}$, the matroid on a singleton of rank zero. 

We write $\mathscr B(f)$ for the set of bases of a morphism of matroids $f$.
When $f$ is the identity morphism of $\mathrm{M}$, the set
$\mathscr B(f)$ is the collection of bases $\mathscr{B}(\mathrm{M})$ of $\mathrm{M}$.
When $f$ is the morphism from $\mathrm{M}$ to the terminal object $\mathrm{U}_{0,1}$, the set $\mathscr B(f)$ is the collection of independent sets $\mathscr{I}(\mathrm{M})$ of $\mathrm{M}$.

We prove the following log-concavity properties for the set of bases of a morphism.
Let $f: \mathrm{M} \to \mathrm{N}$ be any morphism between matroids $\mathrm{M}$ and $\mathrm{N}$ on ground sets $E$ and $F$.
We write $n$ for the cardinality of $E$ and $w=(w_i)_{i \in E}$ for the variables representing the coordinate functions on $\mathbb{R}^E\simeq \mathbb{R}^n$.

\begin{theorem}[Continuous]\label{Continuous}
The basis generating polynomial
\[
\underline{\mathrm{B}}_f(w) \vcentcolon= 
\sum_{S\in \mathscr B(f)}  \ \prod_{i\in S}w_i
\]
is either identically zero or its logarithm is concave on the positive orthant $\mathbb{R}^{n}_{>0}$.
\end{theorem}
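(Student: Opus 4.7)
The plan is to deduce the statement from the theory of Lorentzian polynomials of \cite{BH19}, as suggested by the abstract. Recall that the logarithm of a homogeneous Lorentzian polynomial with nonnegative coefficients is concave on the positive orthant, and log-concavity on the positive orthant is preserved under setting a variable equal to $1$. Since bases of $f$ may have different cardinalities ranging between $\operatorname{rk}_\mathrm{N}(F)$ and $\operatorname{rk}_\mathrm{M}(E)$, my first move is to homogenize $\underline{\mathrm{B}}_f$ by introducing an auxiliary variable $w_0$ and forming
\[
\widetilde{\mathrm{B}}_f(w_0,w) \vcentcolon= \sum_{S \in \mathscr{B}(f)} w_0^{\operatorname{rk}_\mathrm{M}(E) - |S|}\prod_{i \in S}w_i,
\]
which is homogeneous of degree $\operatorname{rk}_\mathrm{M}(E)$. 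It then suffices to prove that $\widetilde{\mathrm{B}}_f$ is Lorentzian, after which Theorem~\ref{Continuous} follows by restricting to $w_0 = 1$.

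To establish the Lorentzian property, I would follow the route signaled by the abstract: introduce a multivariate Tutte polynomial $Z_f(w_0,w;q)$ of the morphism that interpolates between a tractable object and $\widetilde{\mathrm{B}}_f$. The construction should be a common generalization of two known cases, namely Brändén's multivariate Tutte polynomial when $f$ is the identity (so $\mathscr{B}(f) = \mathscr{B}(\mathrm{M})$), and a multivariate independence polynomial when $\mathrm{N} = \mathrm{U}_{0,1}$ (so $\mathscr{B}(f) = \mathscr{I}(\mathrm{M})$). The design must be such that $Z_f(w_0, w; q)$ degenerates, as $q \to 0^+$, to a positive multiple of $\widetilde{\mathrm{B}}_f(w_0, w)$. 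Because the Lorentzian cone is closed, it then suffices to show that $Z_f$ is Lorentzian for all sufficiently small $q > 0$, and then pass to the limit.

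For the core step -- Lorentzianness of $Z_f$ at small positive $q$ -- the natural strategy is to factor $f$ into a sequence of elementary quotients (a Higgs factorization), reducing the problem to a single elementary quotient, and then verify the Hessian-signature criterion for Lorentzian polynomials directly. Alternatively, and perhaps more cleanly, one can try to realize the pair $(\mathrm{M}, \mathrm{N}, f)$ inside a single auxiliary matroid $\widehat{\mathrm{M}}$ on $E \sqcup F \sqcup \{*\}$ (built from a Higgs lift or matroid union construction) whose basis generating polynomial, possibly after specialization and extraction of a coordinate-subsum, equals $\widetilde{\mathrm{B}}_f$; the Lorentzian property would then be inherited from the theorem of \cite{BH19} that basis generating polynomials of matroids are Lorentzian, combined with the fact that projections and nonnegative specializations preserve the Lorentzian property.

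The main obstacle I expect is the construction of $Z_f$ itself and the verification that its Hessian matrices have at most one positive eigenvalue: the rank submodularity inequality defining a morphism is asymmetric in $S_1 \subseteq S_2$, and it is not immediately clear that it is strong enough to yield the required spectral condition uniformly across the relevant iterated partial derivatives. Controlling this condition through an elementary-quotient induction, or equivalently identifying the right auxiliary matroid that packages both $\mathrm{M}$ and $\mathrm{N}$, is where the genuine combinatorial content of the theorem resides.
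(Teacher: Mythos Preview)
Your overall strategy---homogenize, embed in a parametrized Tutte-type polynomial, prove that polynomial Lorentzian for small positive parameters, and pass to the limit---is exactly the route the paper takes. The paper homogenizes to degree $n$ rather than $\operatorname{rk}_\mathrm{M}(E)$ (yours is a partial derivative of theirs, so this is harmless), and uses two parameters $p,q$ tracking $\operatorname{rk}_\mathrm{M}(S)$ and $\operatorname{rk}_\mathrm{N}(f(S))$ separately; the limit isolating $\mathscr{B}(f)$ is $\lim_{p\to 0}\lim_{q\to 0}\, q^{\operatorname{rk}_\mathrm{N} f(E)}\,\mathrm{Z}_{p,q,f}(w_0,pw_1,\ldots,pw_n)$.

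Where the paper diverges from both of your suggestions for the core step is that it neither factors through elementary quotients nor builds an auxiliary single matroid. Instead it recasts the morphism as the two-constituent flag matroid $(f^{-1}(\mathrm{N}),\mathrm{M})$, defines the Tutte polynomial for an arbitrary flag matroid $\mathscr{M}=(\mathrm{M}_1,\ldots,\mathrm{M}_\ell)$, and proves Lorentzianness by induction on $n$: the deletion-contraction identity gives $\partial_i \mathrm{Z}_{q,\mathscr{M}}$ as a positive multiple of $\mathrm{Z}_{q,\mathscr{M}/i}$, so only the quadratic $\partial_0^{\,n-2}\mathrm{Z}_{q,\mathscr{M}}$ needs direct treatment. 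After a diagonal change of variables this reduces to the explicit inequality
\[
\frac{1}{2}\Big(1-\frac{1}{n}\Big)(w_1+\cdots+w_n)^2 \ \ge\ \sum_{1\le i<j\le n} q_1^{d_1(i,j)}\cdots q_\ell^{d_\ell(i,j)}\, w_iw_j,
\]
where $d_k(i,j)=1$ if $i,j$ are parallel in $\mathrm{M}_k$ and $0$ otherwise; this is proved by induction on $(n,\ell)$ using Cauchy--Schwarz and the fact that elements parallel in $\mathrm{M}_\ell$ stay parallel-or-loops in every $\mathrm{M}_k$. Your worry about the asymmetry of the morphism condition is dissolved by passing to the quotient $\mathrm{M}\twoheadrightarrow f^{-1}(\mathrm{N})$: afterwards the problem lives entirely in the symmetric flag-matroid world, and only rank-$2$ data enters at the quadratic step.
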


\begin{theorem}[Discrete]\label{Discrete}
Let $\mathrm{b}_k(f)$  be the number of bases of $f$ of cardinality $k$. For all $k$, we have
\[
\frac{\mathrm{b}_k(f)^2}{{n \choose k}^2} \geq \frac{\mathrm{b}_{k-1}(f)}{{n \choose k-1}} \frac{\mathrm{b}_{k+1}(f)}{{n \choose k+1}}.
\]
\end{theorem}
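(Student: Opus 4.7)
My plan is to derive the Discrete theorem from the Continuous theorem by embedding both into the framework of Lorentzian polynomials of~\cite{BH19}.

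\textbf{First}, I would polarize $\underline{\mathrm{B}}_f$ to produce a homogeneous multi-affine polynomial. Introducing auxiliary variables $y=(y_i)_{i\in E}$, set
\[
\widehat{\mathrm{B}}_f(w,y) \;:=\; \sum_{S \in \mathscr B(f)} \; \prod_{i \in S} w_i \; \prod_{i \notin S} y_i,
\]
which is multi-affine in the $2n$ variables $(w,y)$ and homogeneous of total degree~$n$, with $\underline{\mathrm{B}}_f(w) = \widehat{\mathrm{B}}_f(w,1,\ldots,1)$.

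\textbf{Second}, I would show that $\widehat{\mathrm{B}}_f$ is Lorentzian. By the characterization of multi-affine Lorentzian polynomials in~\cite{BH19}, this reduces to checking that every iterated partial derivative and variable-specialization of $\widehat{\mathrm{B}}_f$ is log-concave on the positive orthant of the remaining variables. The key matroid-theoretic observation is that differentiating in $w_i$ or setting $w_i=0$ (and symmetrically for $y_i$) produces, up to relabeling, the polarized basis generating polynomial of a morphism obtained from $f$ by contraction or deletion along $i$. Dehomogenizing any such polarized polynomial by setting its remaining $y$-variables to $1$ recovers the ordinary basis generating polynomial of the minor, which is log-concave by the Continuous theorem applied to that minor. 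This supplies the required log-concavity at every stage of the criterion.

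\textbf{Third}, I would make the diagonal substitution $w_i \mapsto s$ and $y_i \mapsto t$. Since non-negative linear substitutions preserve the Lorentzian property, the resulting bivariate polynomial
\[
\widehat{\mathrm{B}}_f(s,\ldots,s,\,t,\ldots,t) \;=\; \sum_{k=0}^{n} \mathrm{b}_k(f)\, s^k t^{\,n-k}
\]
is Lorentzian. For homogeneous bivariate polynomials of degree~$n$, an elementary discriminant computation shows that the Lorentzian condition is precisely the ultra-log-concavity
$
\mathrm{b}_k(f)^2/\binom{n}{k}^2 \geq \big(\mathrm{b}_{k-1}(f)/\binom{n}{k-1}\big)\big(\mathrm{b}_{k+1}(f)/\binom{n}{k+1}\big),
$
which is the Discrete theorem.

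The main obstacle is the second step: establishing that derivatives and specializations of $\widehat{\mathrm{B}}_f$ correspond to polarized basis generating polynomials of morphism minors of $f$, which requires a careful definition of deletion and contraction for a morphism and a check that the result remains a morphism in the sense of the paper. Once this matroid-theoretic setup is in place, the remainder of the argument is an appeal to the machinery of~\cite{BH19}.
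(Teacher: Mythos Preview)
Your overall strategy (homogenize, establish the Lorentzian property, then make a diagonal substitution to extract the bivariate polynomial) matches the paper's. Step 3 is exactly what the paper does. The problem is in Steps 1--2: the specific polarization you chose does not work.

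Take the simplest nontrivial case $f:\mathrm{U}_{1,2}\to\mathrm{U}_{0,1}$, so that $\mathscr{B}(f)=\{\varnothing,\{1\},\{2\}\}$. Your polarization is
\[
\widehat{\mathrm{B}}_f(w_1,w_2,y_1,y_2)=y_1y_2+w_1y_2+w_2y_1,
\]
and this polynomial is \emph{not} Lorentzian. Its support is not $\mathrm{M}$-convex: for $\alpha=e_{w_1}+e_{y_2}$ and $\beta=e_{w_2}+e_{y_1}$ the required exchange fails, since neither $w_1w_2$ nor $w_2y_2$ is a monomial of $\widehat{\mathrm{B}}_f$. Equivalently, the Hessian has characteristic polynomial $\lambda^4-3\lambda^2+1$ and hence two positive eigenvalues. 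So the claim in Step~2 that $\widehat{\mathrm{B}}_f$ is Lorentzian is false in general, and no amount of matroid-theoretic bookkeeping about minors will repair it.

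There is also a logical gap in the argument you sketch for Step~2, independent of the counterexample. Strong log-concavity of $\widehat{\mathrm{B}}_f$ means that each iterated partial derivative is log-concave on the full positive orthant in the \emph{remaining} $(w,y)$-variables. You correctly identify these partials with polarizations $\widehat{\mathrm{B}}_g$ of minor morphisms $g$, but then you ``dehomogenize by setting the remaining $y$-variables to $1$'' and invoke the Continuous theorem. That only yields log-concavity of $\underline{\mathrm{B}}_g$ on the slice $\{y=\mathbf{1}\}$, not of $\widehat{\mathrm{B}}_g$ on its whole orthant; the change of variables $(w,y)\mapsto(w_i/y_i)_i$ is not affine, so concavity does not transfer.

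The paper avoids both issues by using a \emph{single} homogenizing variable,
\[
\mathrm{B}_f(w_0,w_1,\ldots,w_n)=\sum_{S\in\mathscr{B}(f)}w_0^{\,n-|S|}\prod_{i\in S}w_i,
\]
and it does \emph{not} deduce the Lorentzian property of $\mathrm{B}_f$ from the Continuous theorem. Instead it proves that the homogeneous multivariate Tutte polynomial $\mathrm{Z}_{p,q,f}$ is Lorentzian for $0<p,q\le 1$ (via an induction on $n$ using a quadratic inequality for rank-$2$ data), and then obtains $\mathrm{B}_f$ as the limit $\lim_{p\to 0}\lim_{q\to 0}q^{\operatorname{rk}_\mathrm{N}f[n]}\mathrm{Z}_{p,q,f}(w_0,pw_1,\ldots,pw_n)$. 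Both the Continuous and Discrete theorems are then read off from the Lorentzian property of $\mathrm{B}_f$; neither is used to prove the other.
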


One recovers the strongest form of Mason's conjecture on  independent sets of a matroid from \cite{Mas72}, proved in \cite{ALOGV18b,BH18}, by considering the morphism to $\mathrm{U}_{0,1}$ in Theorem \ref{Discrete}. 

\begin{example}
Let $\mathbb{A}^1(\mathbb{F}_2)$ be any line in the four-dimensional affine space $\mathbb{A}^4(\mathbb{F}_2)$ over the field $\mathbb{F}_2$,
and let $\mathrm{M}$ be the matroid of affine dependencies on the 14 points $\mathbb{A}^4(\mathbb{F}_2) \setminus \mathbb{A}^1(\mathbb{F}_2)$.
Projecting from  the line  gives a two-to-one map onto the projective plane 
\[
\mathbb{A}^4(\mathbb{F}_2) \setminus \mathbb{A}^1(\mathbb{F}_2) \longrightarrow \mathbb{P}^2(\mathbb{F}_2).
\]
The projection defines a morphism $f$ from $\mathrm{M}$ to the Fano matroid
$\mathrm{F}_7$ with
\[
\Big(\mathrm{b}_0(f),\mathrm{b}_1(f),\mathrm{b}_2(f),\mathrm{b}_3(f), \mathrm{b}_4(f), \mathrm{b}_5(f),\mathrm{b}_6(f),\ldots\Big) =  \Big(0,0,0,224, 840, 1232,0,\ldots\Big).
\]
See Remark \ref{Remark:Linear} for a discussion of morphisms of matroids constructed from linear maps.
\end{example}

\begin{example}
A graph homomorphism is a function between the vertex sets of two graphs that maps adjacent vertices to adjacent vertices.
For example, consider the following graph homomorphism $G\to H$:
\begin{figure}[h]
\begin{tikzpicture}[scale=0.4]
\draw[fill]
(0,0) circle [radius=0.1] node [left] {2}
(6,0) circle [radius=0.1] node [right] {3}
(3,5.1) circle [radius=0.1] node [left] {1}
(2,1) circle [radius=0.1] node [above left] {1}
(4,1) circle [radius=0.1] node [above right] {2}
(3,3) circle [radius=0.1] node [left] {3}
;
\draw
(0,0) -- (6,0) node [midway, below] {}
(6,0) -- (3,5.1) node [midway, right] {}
(3,5.1) -- (0,0) node [midway, left] {}
(2,1) -- (4,1) node [midway, below] {}
(4,1) -- (3,3) node [midway, right] {}
(3,3) -- (2,1) node [midway, left] {}
(0,0) -- (2,1) node [midway, above] {}
(6,0) -- (4,1) node [midway, above] {}
(3,5.2) -- (3,3) node [midway, below left] {}
;
\draw[thick, ->]
(7,2.5) -- (9,2.5);
\end{tikzpicture}
\qquad
\begin{tikzpicture}[scale=0.45]
\draw[fill]
(0,0) circle [radius=0.1] node [left] {2}
(3,0) circle [radius=0.1] node [right] {3}
(1.5,2.6) circle [radius=0.1] node [above left] {1}
;
\draw
(0,0) -- (3,0) node [midway, below=2mm] {}
(3,0) -- (1.5, 2.6) node [midway, right=3mm] {}
(1.5, 2.6) -- (0,0) node [midway, left=3mm] {}
;
\end{tikzpicture}
\end{figure}

\noindent The induced map between the edges defines a morphism between the cycle matroids $f : \mathrm{M}(G) \to \mathrm{M}(H)$ with
\[
\Big(\mathrm{b}_0(f), \mathrm{b}_1(f), \mathrm{b}_2(f), \mathrm{b}_3(f),  \mathrm{b}_4(f), \mathrm{b}_5(f), \mathrm{b}_6(f),\ldots\Big) = \Big(0,0,27,79,111,75,0,\ldots\Big).
\]
See Remark \ref{Remark:Cycle} for a discussion of morphisms of matroids constructed from graph homomorphisms.
\end{example}

\begin{example}
For any cellularly embedded graph $G$ on a compact surface $\Sigma$, the bijection between the edges of $G$ and its geometric dual $G^*$ on $\Sigma$ defines a morphism of matroids 
\[
f: \mathrm{M}(G)^* \longrightarrow \mathrm{M}(G^*),
\]  
where $\mathrm{M}(G)^*$ is the cocycle matroid of $G$ and $ \mathrm{M}(G^*)$ is the cycle matroid of $G^*$, see \cite[Theorem 4.3]{EMM15} for a proof.
The difference between the ranks of the two matroids is $2-\chi(\Sigma)$,
so $f$ is not an isomorphism unless the surface is  a sphere.
For example, consider the embedding of the complete graph $K_7$ on the torus shown below; it is the 1-skeleton of the minimal triangulation of the torus.  
\begin{figure}[h]
\begin{tikzpicture}[scale=0.25]
\newcommand{\myradius}{0.15}
\newcommand{\rtthree}{1.7320508076}
\draw[ultra thick]
(0,0) -- (4,0) -- (8,0) -- (10,-2*\rtthree);
\draw[ultra thick]
(-2,6*\rtthree) -- (2,6*\rtthree) -- (6,6*\rtthree) -- (8,4*\rtthree);
\draw[ultra thick, dotted]
(-2,6*\rtthree) -- (0,4*\rtthree) -- (2,2*\rtthree) -- (0,0);
\draw[ultra thick, dotted]
(8,4*\rtthree) -- (10,2*\rtthree) -- (12,0) -- (10, -2*\rtthree);
\draw[fill]
(0,0) circle [radius=\myradius] node [below left] {1}
(4,0) circle [radius=\myradius] node [below left] {2}
(8,0) circle [radius=\myradius] node [below left] {3}
(10,-2*\rtthree) circle [radius=\myradius] node [left] {1}
(2,2*\rtthree) circle [radius=\myradius] node [left] {6}
(6,2*\rtthree) circle [radius=\myradius] node [below right] {\ \ 7}
(10,2*\rtthree) circle [radius=\myradius] node [right] {4}
(12,0) circle [radius=\myradius] node [right] {6}
(0,4*\rtthree) circle [radius=\myradius] node [left] {4}
(4,4*\rtthree) circle [radius=\myradius] node [below right] {\ \ 5}
(8,4*\rtthree) circle [radius=\myradius] node [right] {1}
(-2, 6*\rtthree) circle [radius=\myradius] node [above] {1}
(2, 6*\rtthree) circle [radius=\myradius] node [above] {2}
(6, 6*\rtthree) circle [radius=\myradius] node [above] {3}
;
\draw
(8,0) -- (12,0)
(2,2*\rtthree) -- (6,2*\rtthree) -- (10,2*\rtthree)
(0,4*\rtthree) -- (4,4*\rtthree) -- (8,4*\rtthree)
(2,6*\rtthree) -- (4,4*\rtthree) -- (6,2*\rtthree) -- (8,0)
(0,4*\rtthree) -- (2, 6*\rtthree)
(2,2*\rtthree) -- (4,4*\rtthree) -- (6,6*\rtthree)
(4,0) -- (6,2*\rtthree) -- (8,4*\rtthree)
(8,0) -- (10,2*\rtthree)
(2,2*\rtthree) -- (4,0)
;
\end{tikzpicture}
\qquad
\begin{tikzpicture}[scale=0.25]
\newcommand{\myradius}{0.15}
\newcommand{\rtthree}{1.7320508076}
\draw[gray]
(0,0) -- (4,0) -- (8,0) -- (10,-2*\rtthree);
\draw[gray]
(-2,6*\rtthree) -- (2,6*\rtthree) -- (6,6*\rtthree) -- (8,4*\rtthree);
\draw[gray,dotted]
(-2,6*\rtthree) -- (0,4*\rtthree) -- (2,2*\rtthree) -- (0,0);
\draw[gray, dotted]
(8,4*\rtthree) -- (10,2*\rtthree) -- (12,0) -- (10, -2*\rtthree);
\draw[fill, gray]
(0,0) circle [radius=\myradius] 
(4,0) circle [radius=\myradius] 
(8,0) circle [radius=\myradius] 
(10,-2*\rtthree) circle [radius=\myradius] node [below] {\ }
(2,2*\rtthree) circle [radius=\myradius] 
(6,2*\rtthree) circle [radius=\myradius] 
(10,2*\rtthree) circle [radius=\myradius] 
(12,0) circle [radius=\myradius] 
(0,4*\rtthree) circle [radius=\myradius] 
(4,4*\rtthree) circle [radius=\myradius] 
(8,4*\rtthree) circle [radius=\myradius] 
(-2, 6*\rtthree) circle [radius=\myradius] 
(2, 6*\rtthree) circle [radius=\myradius] 
(6, 6*\rtthree) circle [radius=\myradius] 
;
\draw[gray]
(8,0) -- (12,0)
(2,2*\rtthree) -- (6,2*\rtthree) -- (10,2*\rtthree)
(0,4*\rtthree) -- (4,4*\rtthree) -- (8,4*\rtthree)
(2,6*\rtthree) -- (4,4*\rtthree) -- (6,2*\rtthree) -- (8,0)
(0,4*\rtthree) -- (2, 6*\rtthree)
(2,2*\rtthree) -- (4,4*\rtthree) -- (6,6*\rtthree)
(4,0) -- (6,2*\rtthree) -- (8,4*\rtthree)
(8,0) -- (10,2*\rtthree)
(2,2*\rtthree) -- (4,0)
;
\draw[fill]
(10, -2*\rtthree/3) circle [radius=\myradius]
(4, 4*\rtthree/3) circle [radius=\myradius]
(8, 4*\rtthree/3) circle [radius=\myradius]
(2, 10*\rtthree/3) circle [radius=\myradius]
(6, 10*\rtthree/3) circle [radius=\myradius]
(0, 16*\rtthree/3) circle [radius=\myradius]
(4, 16*\rtthree/3) circle [radius=\myradius]
;
\draw
(2,2*\rtthree/3) -- (4,4*\rtthree/3) -- (6, 2*\rtthree/3) -- (8,4*\rtthree/3) -- (10,2*\rtthree/3) -- (10,-2*\rtthree/3)
(4, 4*\rtthree/3) -- (4, 8*\rtthree/3) -- (2,10*\rtthree/3) --  (2,14*\rtthree/3) --  (0,16*\rtthree/3)
(8, 4*\rtthree/3) -- (8, 8*\rtthree/3) --(6,10*\rtthree/3) --  (6,14*\rtthree/3) --  (4,16*\rtthree/3) -- (2,14*\rtthree/3)
(4,8*\rtthree/3) -- (6,10*\rtthree/3)
(10,-2*\rtthree/3) -- (8, - 4*\rtthree/3)
(6, 2*\rtthree/3) -- (6, -2*\rtthree/3)
(2, 2*\rtthree/3) -- (2, -2*\rtthree/3)
(2, 2*\rtthree/3) -- (0, 4*\rtthree/3)
(0, 8*\rtthree/3) -- (2,10*\rtthree/3)
(0, 16*\rtthree/3) -- (-2,14*\rtthree/3)
;
\draw[fill=white]
(2, 2*\rtthree/3) circle [radius=\myradius]
(6, 2*\rtthree/3) circle [radius=\myradius]
(10, 2*\rtthree/3) circle [radius=\myradius]
(4, 8*\rtthree/3) circle [radius=\myradius]
(8, 8*\rtthree/3) circle [radius=\myradius]
(2, 14*\rtthree/3) circle [radius=\myradius]
(6, 14*\rtthree/3) circle [radius=\myradius]
;
\end{tikzpicture}
\end{figure}

\noindent The geometric dual $K_7^*$  is the Heawood graph,
the point-line incidence graph of the projective plane  $\mathbb{P}^2(\mathbb{F}_2)$.
\noindent One can check that $f: \mathrm{M}(K_7)^* \to \mathrm{M}(K_7^*)$ is a morphism with
\[
\Big(\ldots,\mathrm{b}_{13}(f), \mathrm{b}_{14}(f), \mathrm{b}_{15}(f),\ldots\Big) = \Big(\ldots,50421,47715,16807,\ldots\Big).
\]
We point to \url{https://github.com/chrisweur/matroidmap} for a Macaulay2 code supporting the computations here.
\end{example}

We identify $E$ with $[n]$, and introduce a variable $w_0$ different from the variables $w_1,\ldots,w_n$.
The \emph{homogeneous multivariate Tutte polynomial} of $f:\mathrm{M} \to \mathrm{N}$ is the homogeneous polynomial of degree $n$ in $n+1$ variables
\[
\mathrm{Z}_{p,q,f}(w_0, w_1, \ldots, w_n) \vcentcolon= \sum_{S\subseteq [n]} p^{-\operatorname{rk}_{\mathrm{M}}(S)}q^{-\operatorname{rk}_{\mathrm{N}}(f(S))} w_0^{n-|S|}\prod_{i\in S} w_i,
\]
where $p$ and $q$ are real  parameters.
 We show that the homogeneous multivariate Tutte polynomials are Lorentzian in the sense of \cite{BH19}, and deduce Theorems \ref{Continuous} and \ref{Discrete} from the Lorentzian property.

\begin{theorem}\label{TheoremLorentzian}
For any positive real numbers $p \le 1$ and $q \le 1$,
the polynomial $\mathrm{Z}_{p,q,f}$ is Lorentzian.
\end{theorem}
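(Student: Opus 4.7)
The plan is to invoke the recursive characterization of Lorentzian polynomials from \cite{BH19} and to induct on $|E|=n$. The two prerequisites are immediate: the coefficients $p^{-\operatorname{rk}_{\mathrm{M}}(S)}q^{-\operatorname{rk}_{\mathrm{N}}(f(S))}$ are strictly positive, and the support $\{(n-|S|,\chi_S):S\subseteq[n]\}$ is M-convex by a direct exchange check (in any pair of support vectors differing at some coordinate, one can always trade to another differing coordinate while remaining in the support). So I may appeal to the criterion that a homogeneous polynomial of degree $\ge 2$ with nonnegative coefficients and M-convex support is Lorentzian if and only if each first-order partial derivative is Lorentzian, the degree-two base case asking that the Hessian have at most one positive eigenvalue.

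For each $i\in[n]$, a direct calculation yields the clean factorization
\[
\partial_{w_i}\,\mathrm{Z}_{p,q,f} \;=\; p^{-\operatorname{rk}_{\mathrm{M}}(\{i\})}\,q^{-\operatorname{rk}_{\mathrm{N}}(f(\{i\}))}\,\mathrm{Z}_{p,q,f/i},
\]
where the contraction $f/i\colon\mathrm{M}/i\to\mathrm{N}/f(i)$ is itself a morphism (verified by applying the defining inequality to the pair $S_1\cup\{i\}\subseteq S_2\cup\{i\}$), so this derivative is Lorentzian by the inductive hypothesis on $|E|$. The derivative $\partial_{w_0}\,\mathrm{Z}_{p,q,f}$ does not reduce to a polynomial for a morphism on a smaller ground set, so I would strengthen the inductive claim to cover every iterated derivative $\partial_{w_0}^j\,\mathrm{Z}_{p,q,g}$ for each morphism $g$ and each $0\le j\le|E_g|$, and close the argument via a nested downward induction on $j$ whose base cases $j\ge|E_g|-1$ deliver a linear or constant polynomial.

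The main obstacle is the degree-two base case. Setting $a_i=p^{-\operatorname{rk}_{\mathrm{M}}(\{i\})}q^{-\operatorname{rk}_{\mathrm{N}}(g(\{i\}))}$ and $a_{ij}=p^{-\operatorname{rk}_{\mathrm{M}}(\{i,j\})}q^{-\operatorname{rk}_{\mathrm{N}}(g(\{i,j\}))}$, the Hessian of the resulting quadratic $\partial_{w_0}^{m-2}\,\mathrm{Z}_{p,q,g}$ has entries $H_{00}=m!$, $H_{0i}=(m-1)!\,a_i$, $H_{ii}=0$, and $H_{ij}=(m-2)!\,a_{ij}$. Taking the Schur complement at $H_{00}$, the condition that $H$ have at most one positive eigenvalue is equivalent to the negative semi-definiteness of the symmetric matrix $B$ with $B_{ii}=-\tfrac{m-1}{m}a_i^2$ and $B_{ij}=a_{ij}-\tfrac{m-1}{m}\,a_i a_j$, i.e.\ to the scalar inequality
\[
2\sum_{i<j}a_{ij}\,x_ix_j \;\le\; \tfrac{m-1}{m}\Bigl(\sum_i a_i\,x_i\Bigr)^2 \quad \text{for all } x\in\mathbb{R}^m.
\]
To establish this I would combine three ingredients: the submodularity of both $\operatorname{rk}_{\mathrm{M}}$ and $\operatorname{rk}_{\mathrm{N}}\circ g$ (the latter being a matroid rank function on $E_g$ precisely because $g$ is a morphism), which together with $p,q\le 1$ yields $a_{ij}\le a_i a_j$; the morphism inequality $\operatorname{rk}_{\mathrm{N}}\circ g\le\operatorname{rk}_{\mathrm{M}}$ together with the unit-increase axiom, which restricts each $a_i$ to the small discrete set $\{1,p^{-1},(pq)^{-1}\}$ and similarly constrains the $a_{ij}$; and the Cauchy--Schwarz bound $(\sum_i a_i x_i)^2\le m\sum_i a_i^2 x_i^2$. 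The delicate point is that the bound $a_{ij}\le a_i a_j$ by itself is not enough to close the inequality on all of $\mathbb{R}^m$, and the full constraint lattice imposed by the morphism axioms must be exploited to complete the case analysis.
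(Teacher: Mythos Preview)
Your overall architecture matches the paper's: induct on $n$, verify $\mathrm{M}$-convexity, use $\partial_{w_i}\mathrm{Z}_{p,q,f}=c\cdot\mathrm{Z}_{p,q,(\text{contraction})}$ for $i\in[n]$, and reduce the residual $\partial_{w_0}^{n-2}$ case to a Schur-complement inequality. One technical wrinkle: for non-injective $f$ the contraction $f/i\colon\mathrm{M}/i\to\mathrm{N}/f(i)$ is not a well-defined function (where does $j\ne i$ go when $f(j)=f(i)$?). The paper avoids this by first replacing $f$ with the quotient $\mathrm{M}\twoheadrightarrow f^{-1}(\mathrm{N})$ on $E$; contraction is then the ordinary operation on each constituent, and your derivative identity holds verbatim for the resulting flag matroid.

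The substantive gap is the quadratic inequality itself. You correctly isolate it and correctly note that $a_{ij}\le a_ia_j$ alone is insufficient, but ``exploiting the full constraint lattice'' is not yet a proof, and the ingredients you list do not obviously assemble into one. The paper's Lemma~\ref{MainLemma} supplies two ideas you are missing. First, after the substitution $x_i\mapsto a_i^{-1}y_i$ the coefficient of $y_iy_j$ becomes $a_{ij}/(a_ia_j)=p^{d_\mathrm{M}(i,j)}q^{d_{\mathrm{N}'}(i,j)}$, where $\mathrm{N}'=f^{-1}(\mathrm{N})$ and $d(i,j)\in\{0,1\}$ is the parallel indicator; the inequality is then \emph{affine} in each of $p$ and $q$, so it suffices to treat $p,q\in\{0,1\}$. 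Second, at a boundary value $0$---say $p=0$---any pair $i,j$ parallel in $\mathrm{M}$ contributes nothing, and Lemma~\ref{lem:matquot} (parallel in $\mathrm{M}$ forces parallel-or-both-loops in the quotient $\mathrm{N}'$) gives $d_\mathrm{M}(i,k)=d_\mathrm{M}(j,k)$ and $d_{\mathrm{N}'}(i,k)=d_{\mathrm{N}'}(j,k)$ for all $k\ne i,j$; one may therefore replace $(y_i,y_j)$ by the single variable $y_i+y_j$ and induct on $m$. The remaining cases (no parallel pair, or parameter equal to $1$) collapse directly to Cauchy--Schwarz or to a single-matroid instance. Without this linearity-plus-merging mechanism the case analysis you gesture at has no evident closure.
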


One recovers the Lorentzian property of the homogeneous multivariate Tutte polynomial of a matroid $\mathrm{M}$ \cite[Theorem 11.1]{BH19} by considering the morphism from $\mathrm{M}$ to $\mathrm{U}_{0,1}$.
We will establish Theorem \ref{TheoremLorentzian} in the more general context of flag matroids.  

After we review notions surrounding morphisms of matroids in Section \ref{SectionMorphism}, we  turn to flag matroids, which are Coxeter matroids of type A in the sense of \cite{BGW03}, and define homogeneous multivariate Tutte polynomials of flag matroids in Section \ref{SectionTutte}.  We show in Section \ref{SectionLorentzian} that these polynomials are Lorentzian in the sense of \cite{BH19}, and deduce the three main theorems stated above.
We close the paper in Section  \ref{SectionProblems} with a question and a conjecture.

\section{Morphisms of matroids}\label{SectionMorphism}

\subsection{}

Let $\mathrm{M}$ and $\mathrm{N}$ be matroids on ground sets $E$ and $F$.  
A \emph{morphism} $f:\mathrm{M} \to \mathrm{N}$ is a function from $E$ to $F$ satisfying any one of the following equivalent conditions:
\begin{enumerate}[(1)]\itemsep 5pt
\item  For any $S_1 \subseteq S_2 \subseteq E$, we have
$\operatorname{rk}_{\mathrm{N}}(f(S_2)) -\operatorname{rk}_{\mathrm{N}}(f(S_1)) \le \operatorname{rk}_{\mathrm{M}}(S_2) -\operatorname{rk}_{\mathrm{M}}(S_1)$.
\item If $T\subseteq F $ is a cocircuit of $\mathrm{N}$, then $f^{-1}(T) \subseteq E$ is a union of cocircuits of $\mathrm{M}$.
\item If $T\subseteq F $ is a flat of $\mathrm{N}$, then $f^{-1}(T)\subseteq E$ is a flat of $\mathrm{M}$.
\end{enumerate}
For all undefined terms in matroid theory, we refer to \cite{Oxl11}.  
The equivalence of the three conditions readily follows from the properties of strong maps \cite{Kun86}.  
Basic properties of the category $\mathsf{Mat}$ with morphisms as defined above was  studied in \cite{HP18}.

\begin{remark}[Representable matroids]\label{Remark:Linear}
Let $\mathsf{Mat}(\mathbb{F})$ be the category whose objects are functions of the form 
\[
\varphi:E \longrightarrow W,
\] 
where $E$ is a finite set and $W$ is a vector space over a field $\mathbb{F}$.
We write $\mathrm{M}(\varphi)$ for the matroid on $E$ defined by the rank function
\[
\textrm{rk}_{\mathrm{M}(\varphi)}(S)= \text{the dimension of the span of $\varphi(S)$ over $\mathbb{F}$}.
\]
The object $\varphi$ is called a \emph{representation} of $\mathrm{M}(\varphi)$ over $\mathbb{F}$ \cite[Chapter 6]{Wel76}.
A morphism from $\varphi_1$ to $\varphi_2$ in $\mathsf{Mat}(\mathbb F)$  is a commutative diagram
\[
\xymatrix{
E_1 \ar[r]^{\varphi_1} \ar[d]& W_1 \ar[d]\\
E_2 \ar[r]^{\varphi_2}& W_2
}
\]
where  $W_1 \to W_2$ is a linear map between the vector spaces.
Using the description of morphisms of matroids in terms of flats, we see that the function $E_1 \to E_2$ gives a morphism of matroids $\mathrm{M}(\varphi_1) \to \mathrm{M}(\varphi_2)$,
defining a functor   
\[
\mathscr{R}_\mathbb{F}:\mathsf{Mat}(\mathbb{F}) \longrightarrow \mathsf{Mat}, \quad \varphi \longmapsto \mathrm{M}(\varphi).
\]
\end{remark}

\begin{remark}[Cycle matroids]\label{Remark:Cycle}
Let $\mathsf{Graph}$ be the category of graphs, that is, functions  of the form 
\[
\varphi:E  \longrightarrow V^{(2)},
\]
 where $E$ is a finite set  and $V^{(2)}$ is the set of two-element multi-subsets of another finite set $V$.
We write $\mathrm{M}(\varphi)$ for the matroid on $E$ defined by the condition
\[
\text{$S$ is independent in $\mathrm{M}(\varphi)$} \Longleftrightarrow \text{$S$ does not contain any cycle of the graph $\varphi$.}
\]
The matroid $\mathrm{M}(\varphi)$ is called the \emph{cycle matroid} of  $\varphi$ \cite[Chapter 5]{Oxl11}.
A morphism from $\varphi_1$
to $\varphi_2$ in $\mathsf{Graph}$ is a commutative diagram
\[
\xymatrix{
E_1 \ar[r]^{\varphi_1} \ar[d]&  V_1^{(2)} \ar[d]\\
E_2  \ar[r]^{\varphi_2} &  V_2^{(2)}
}
\]
where $V_1^{(2)} \to V_2^{(2)}$ is a map induced from a map $V_1 \to V_2$.
Using the description of morphisms of matroids in terms of flats, we see that  the function $E_1 \to E_2$ gives a morphism of matroids $\mathrm{M}(\varphi_1) \to \mathrm{M}(\varphi_2)$,
defining a functor
\[
\mathscr{C}:\mathsf{Graph} \longrightarrow \mathsf{Mat}, \quad  \varphi \longmapsto \mathrm{M}(\varphi).
\]
\end{remark}

\subsection{}

 A \emph{matroid quotient} is a morphism of matroids $f: \mathrm{M} \to \mathrm{N}$ whose underlying map between the ground sets is the identity function of a finite set.
 In this case, $\mathrm{N}$ is said to be a quotient of $\mathrm{M}$,
 and we denote the morphism by $\mathrm{M} \twoheadrightarrow \mathrm{N}$.
 Many equivalent descriptions of matroid quotients are given in \cite[Proposition 7.4.7]{Bry86}.  
 For later use, we record here two immediate but useful properties of matroid quotients.
Recall that an element $i$ is a \emph{loop} of $\mathrm{M}$ if $\{i\}$ is a circuit of $\mathrm{M}$, and that distinct elements $i$, $j$ are \emph{parallel} in $\mathrm{M}$ if $\{i,j\}$ is a circuit of $\mathrm{M}$.

\begin{lemma}\label{lem:matquot}
Let $\mathrm{M} \twoheadrightarrow \mathrm{N}$ be a matroid quotient on $E$, and let $i$, $j$ be  distinct elements of  $E$.
\begin{enumerate}[(1)]\itemsep 5pt
\item If $i$ is a loop in $\mathrm{M}$, then $i$ is a loop in $\mathrm{N}$. 
\item  If $i$, $j$ are parallel in $\mathrm{M}$, then either $i$, $j$ are parallel in $\mathrm{N}$ or both $i$, $j$ are loops in $\mathrm{N}$.
\end{enumerate}
\end{lemma}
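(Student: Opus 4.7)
The plan is to extract everything from the defining rank inequality of a morphism, which for a matroid quotient takes the especially clean form that $\operatorname{rk}_{\mathrm{N}}(S_2)-\operatorname{rk}_{\mathrm{N}}(S_1)\le \operatorname{rk}_{\mathrm{M}}(S_2)-\operatorname{rk}_{\mathrm{M}}(S_1)$ for all $S_1\subseteq S_2\subseteq E$, with the identity map on ground sets. Specializing $S_1=\emptyset$ immediately yields the pointwise bound $\operatorname{rk}_{\mathrm{N}}(S)\le \operatorname{rk}_{\mathrm{M}}(S)$. Both parts of the lemma should fall out by applying this inequality to singletons and pairs.

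For part (1), I would simply take $S=\{i\}$ in the pointwise bound: if $i$ is a loop of $\mathrm{M}$ then $\operatorname{rk}_{\mathrm{M}}(\{i\})=0$, forcing $\operatorname{rk}_{\mathrm{N}}(\{i\})=0$, so $i$ is a loop of $\mathrm{N}$.

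For part (2), assume $i,j$ are parallel in $\mathrm{M}$, so $\operatorname{rk}_{\mathrm{M}}(\{i\})=\operatorname{rk}_{\mathrm{M}}(\{j\})=\operatorname{rk}_{\mathrm{M}}(\{i,j\})=1$. Applying the morphism inequality with $S_1=\{i\}$ and $S_2=\{i,j\}$ gives $\operatorname{rk}_{\mathrm{N}}(\{i,j\})\le \operatorname{rk}_{\mathrm{N}}(\{i\})$, and symmetrically with $j$. Combined with monotonicity of rank (condition (2) in the matroid axioms), this forces the common value $r\vcentcolon= \operatorname{rk}_{\mathrm{N}}(\{i\})=\operatorname{rk}_{\mathrm{N}}(\{j\})=\operatorname{rk}_{\mathrm{N}}(\{i,j\})$. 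The pointwise bound gives $r\in\{0,1\}$, and I would split on this value: if $r=0$, then both $i$ and $j$ are loops in $\mathrm{N}$; if $r=1$, then $\{i,j\}$ is a dependent set in $\mathrm{N}$ neither of whose singletons is a loop, so by minimality $\{i,j\}$ is itself a circuit of $\mathrm{N}$, i.e.\ $i$ and $j$ are parallel in $\mathrm{N}$.

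There is no serious obstacle here; the entire proof is a direct computation from the defining inequality applied to sets of size at most two, plus the trivial observation that a two-element dependent set containing no loop must be a circuit. If anything, the only subtlety worth stating explicitly is that monotonicity in $\mathrm{N}$ is needed to upgrade the two one-sided inequalities into equalities $\operatorname{rk}_{\mathrm{N}}(\{i\})=\operatorname{rk}_{\mathrm{N}}(\{i,j\})=\operatorname{rk}_{\mathrm{N}}(\{j\})$ before case-splitting.
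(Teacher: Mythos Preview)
Your proof is correct and follows essentially the same approach as the paper: both arguments apply the quotient inequality $\operatorname{rk}_{\mathrm{N}}(S_2)-\operatorname{rk}_{\mathrm{N}}(S_1)\le \operatorname{rk}_{\mathrm{M}}(S_2)-\operatorname{rk}_{\mathrm{M}}(S_1)$ to the pairs $\{i\}\subseteq\{i,j\}$ and $\{j\}\subseteq\{i,j\}$ (and $\emptyset\subseteq\{i\}$ for part~(1)) to force $\operatorname{rk}_{\mathrm{N}}(i)=\operatorname{rk}_{\mathrm{N}}(j)=\operatorname{rk}_{\mathrm{N}}(i,j)$. Your version is just slightly more explicit about the case split on the common value $r\in\{0,1\}$ and the use of monotonicity; the paper leaves these routine steps implicit.
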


\begin{proof}
The first statement follows from $ \operatorname{rk}_\mathrm{N}(i)  \le \operatorname{rk}_\mathrm{M}(i)$.
For the second statement, note that
\[
\operatorname{rk}_\mathrm{N}(i, j) - \operatorname{rk}_\mathrm{N}(i) \le \operatorname{rk}_\mathrm{M}(i, j) - \operatorname{rk}_\mathrm{M}(i) \ \ \text{and} \ \
\operatorname{rk}_\mathrm{N}(i, j) - \operatorname{rk}_\mathrm{N}(j) \le \operatorname{rk}_\mathrm{M}(i, j) - \operatorname{rk}_\mathrm{M}(j).
\]
Thus $\operatorname{rk}_\mathrm{M}(i, j) = \operatorname{rk}_\mathrm{M}(i)  = \operatorname{rk}_\mathrm{M}(j)$
implies  $\operatorname{rk}_\mathrm{N}(i, j) = \operatorname{rk}_\mathrm{N}(i)  = \operatorname{rk}_\mathrm{N}(j)$.
\end{proof}

Questions on morphisms of matroids can often be reduced to those on matroid quotients.
Let $\mathrm{M}$ and $\mathrm{N}$ be matroids on ground sets $E$ and $F$,
and let $f$ be a function from $E$ to $F$.
We write $f^{-1}(\mathrm{N})$ for the matroid on $E$ defined by the rank function
\[
 \operatorname{rk}_{f^{-1}(\mathrm{N})}(S)  = \operatorname{rk}_\mathrm{N}(f(S)) \ \ \text{for $S \subseteq E$}.
\]
Informally, $f^{-1}(\mathrm{N})$ is the matroid obtained from  the restriction $\mathrm{N}|_{f(E)}$ by replacing each non-loop element $e$ with a set of parallel elements $f^{-1}(e)$
and each loop $e$ by a set of loops $f^{-1}(e)$. 
See \cite[Chapter 8.2]{Wel76} for a more general construction of \emph{induced matroids}.

\begin{lemma}
The function $f$ defines a morphism $\mathrm{M} \to \mathrm{N}$ if and only if $f^{-1}(\mathrm{N})$ is a quotient of $\mathrm{M}$.
In this case, the set of bases $\mathscr{B}(\mathrm{M} \to \mathrm{N})$ is either empty or equal to  $\mathscr{B}(\mathrm{M} \twoheadrightarrow f^{-1}(\mathrm{N}))$.
\end{lemma}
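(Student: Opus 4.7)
The plan is to unwind definitions carefully, the only substantive point being the verification that $f^{-1}(\mathrm{N})$ is actually a matroid. I would first check the rank axioms for the proposed rank function $\operatorname{rk}_{f^{-1}(\mathrm{N})}(S) \vcentcolon= \operatorname{rk}_\mathrm{N}(f(S))$. Subcardinality follows from $\operatorname{rk}_\mathrm{N}(f(S)) \le |f(S)| \le |S|$, and monotonicity is immediate from $S_1 \subseteq S_2 \Rightarrow f(S_1) \subseteq f(S_2)$. For submodularity, the key observation is that $f(S_1 \cup S_2) = f(S_1) \cup f(S_2)$ while $f(S_1 \cap S_2) \subseteq f(S_1) \cap f(S_2)$; combining these with monotonicity and submodularity of $\operatorname{rk}_\mathrm{N}$ gives
\[
\operatorname{rk}_\mathrm{N}(f(S_1 \cup S_2)) + \operatorname{rk}_\mathrm{N}(f(S_1 \cap S_2)) \le \operatorname{rk}_\mathrm{N}(f(S_1)) + \operatorname{rk}_\mathrm{N}(f(S_2)).
\]

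Once $f^{-1}(\mathrm{N})$ is known to be a matroid, the equivalence is a direct reading of the morphism condition (1) in the definition: $f : \mathrm{M} \to \mathrm{N}$ is a morphism iff $\operatorname{rk}_\mathrm{N}(f(S_2)) - \operatorname{rk}_\mathrm{N}(f(S_1)) \le \operatorname{rk}_\mathrm{M}(S_2) - \operatorname{rk}_\mathrm{M}(S_1)$ for all $S_1 \subseteq S_2 \subseteq E$, and by the defining formula for $f^{-1}(\mathrm{N})$ this is verbatim the condition that $\mathrm{id}_E : \mathrm{M} \to f^{-1}(\mathrm{N})$ be a morphism, i.e., a matroid quotient.

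For the statement about bases, I would rewrite the two spanning conditions in terms of ranks. The condition that $f(S)$ contains a basis of $\mathrm{N}$ is equivalent to $\operatorname{rk}_\mathrm{N}(f(S)) = \operatorname{rk}_\mathrm{N}(F)$, while the condition that $S$ contains a basis of $f^{-1}(\mathrm{N})$ is equivalent to $\operatorname{rk}_{f^{-1}(\mathrm{N})}(S) = \operatorname{rk}_{f^{-1}(\mathrm{N})}(E) = \operatorname{rk}_\mathrm{N}(f(E))$. Since the two sets of bases share the common condition that $S$ be contained in a basis of $\mathrm{M}$, the conclusion falls out of a dichotomy: if $f(E)$ spans $\mathrm{N}$, then $\operatorname{rk}_\mathrm{N}(f(E)) = \operatorname{rk}_\mathrm{N}(F)$ and the two spanning conditions on $S$ agree, giving $\mathscr{B}(\mathrm{M} \to \mathrm{N}) = \mathscr{B}(\mathrm{M} \twoheadrightarrow f^{-1}(\mathrm{N}))$; if $f(E)$ does not span $\mathrm{N}$, then $f(S) \subseteq f(E)$ cannot contain a basis of $\mathrm{N}$ for any $S$, so $\mathscr{B}(\mathrm{M} \to \mathrm{N}) = \varnothing$.

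There is no real obstacle — the whole argument is a definitional chase — and the only mildly technical step is the submodularity verification above, where one must be careful that $f(S_1 \cap S_2)$ is only a subset of, not equal to, $f(S_1) \cap f(S_2)$.
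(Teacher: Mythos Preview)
Your proposal is correct and follows essentially the same approach as the paper: the paper treats the fact that $f^{-1}(\mathrm{N})$ is a matroid as known (citing the induced matroid construction) and then argues, exactly as you do, that the first claim is immediate from the definitions and that the second reduces to the dichotomy on whether $f(E)$ spans $\mathrm{N}$. Your version simply fills in the rank-axiom verification that the paper omits.
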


Therefore, the basis generating polynomial of a morphism $f:\mathrm{M} \to \mathrm{N}$ is either identically zero or equal to the basis generating polynomial of the quotient $\mathrm{M} \twoheadrightarrow f^{-1}(\mathrm{N})$.

\begin{proof}
The first statement is obvious, given that  $f^{-1}(\mathrm{N})$ is a matroid.
For the second statement, note that  $\mathscr{B}(\mathrm{M} \to \mathrm{N})$ is nonempty if and only if $f(E)$ is a spanning set of $\mathrm{N}$.
In this case, $S \subseteq E$ is a spanning set of $f^{-1}(\mathrm{N})$ if and only if
$f(S) \subseteq F$ is a spanning set of $\mathrm{N}$, and hence
\[
\mathscr{B}(\mathrm{M} \to \mathrm{N})=\mathscr{B}(\mathrm{M} \twoheadrightarrow f^{-1}(\mathrm{N})).
\qedhere
\]
\end{proof}

We remark that the collection of bases of a quotient is the collection of feasible sets of a \emph{saturated delta-matroid}  and conversely \cite{Tar85}.
Such a nonempty collection $\mathscr{F}$ is characterized by its properties
\begin{enumerate}[(1)]\itemsep 5pt
\item for any $S_1,S_2 \in \mathscr{F}$ and any $S_3$ containing $S_1$ and contained in $S_2$, we have $S_3 \in \mathscr{F}$, and
\item for any $S_1,S_2 \in \mathscr{F}$ and any $i \in S_1 \triangle S_2$, there is $j \in S_1 \triangle S_2$ such that $S_1 \triangle \{i,j\} \in \mathscr{F}$.
\end{enumerate}
The collection of bases of $\mathrm{M} \twoheadrightarrow \mathrm{N}$ of a given cardinality $k$ is, when nonempty, the set of bases of a matroid, the rank $k$ \emph{Higgs lift} of $\mathrm{N}$ toward $\mathrm{M}$ \cite[Exercise 7.20]{Bry86}.

\section{The multivariate Tutte polynomial of a flag matroid}\label{SectionTutte}

\subsection{}

Let $\mathrm{M}$ be a matroid on  $E$, and let $w=(w_i)_{i \in E}$.
The \emph{multivariate Tutte polynomial}, also called the Potts model partition function, of $\mathrm{M}$ is the polynomial
\[
\underline{\mathrm{Z}}_{q,\mathrm{M}}(w) \vcentcolon= \sum_{S\subseteq E} q^{-\operatorname{rk}_\mathrm{M}(S)} \prod_{i\in S}w_i,
\]
where $q$ is a real parameter \cite{Sok05}.
The polynomial satisfies the \emph{deletion-contraction relation}
\[
\underline{\mathrm{Z}}_{q,\mathrm{M}} = \underline{\mathrm{Z}}_{q,\mathrm{M} \setminus i}+ q^{-\operatorname{rk}_\mathrm{M}(i)} \ w_i\  \underline{\mathrm{Z}}_{q,\mathrm{M}/i} \ \ \text{for any $i \in E$}.
\]
It  is related to the usual Tutte polynomial \cite[Chapter 15]{Wel76} , denoted $\mathrm{T}_\mathrm{M}(x,y)$, by the change of variables
\[
q=(x-1)(y-1)\ \ \text{and} \ \ w_i= (y-1) \ \ \text{for all $i \in E$}.
\]
More precisely, with the above values of $q$ and $w$, we have 
\[
(x-1)^{-\operatorname{rk}_\mathrm{M}(E)}\ \mathrm{T}_\mathrm{M}(x,y) = \underline{\mathrm{Z}}_{q,\mathrm{M}}(w).
\]
We refer to \cite{Sok05} for more combinatorial properties of the multivariate Tutte polynomial and its connection to statistical physics.
Two notable limits are
\[
\lim_{q\to 0} \ q^{\operatorname{rk}_\mathrm{M}(E)}  \ \underline{\mathrm{Z}}_{q,\mathrm{M}}(w)  = \sum_{S \in \mathscr{S}(\mathrm{M})} \prod_{i \in S} w_i \ \  \text{and}\ \ 
\lim_{q\to 0} \  \underline{\mathrm{Z}}_{q,\mathrm{M}}(qw)  = \sum_{S \in \mathscr{I}(\mathrm{M})} \prod_{i \in S} w_i,
\]
where $ \mathscr{S}(\mathrm{M})$ is the collection of spanning sets of $\mathrm{M}$
and $ \mathscr{I}(\mathrm{M})$ is the collection of independent sets of $\mathrm{M}$.

In \cite{LV75}, Las Vergnas introduced Tutte polynomials of matroid quotients and showed that the bases of a matroid quotient serve the same role as the bases of a matroid in defining the Tutte polynomial via internal-external activities.  
We refer to the series of papers \cite{LV80,TutteII,TutteI,TutteIII,TutteIV,TutteV} for properties and applications.

\begin{definition}
The \emph{Tutte polynomial  of a matroid quotient} $\mathrm{M} \twoheadrightarrow \mathrm{N}$ on $E$ is the trivariate polynomial
\[
\mathrm{T}_{\mathrm{M}\twoheadrightarrow \mathrm{N}}(x,y,z) \vcentcolon= \sum_{S\subseteq E} (x-1)^{\operatorname{crk}_{\mathrm{N}}(S)}(y-1)^{|S|-\operatorname{rk}_\mathrm{M}(S)}z^{\operatorname{crk}_\mathrm{M}(S) - \operatorname{crk}_{\mathrm{N}}(S)},
\]
where $\operatorname{crk}_{\mathrm{M}}(S)=\operatorname{rk}_{\mathrm{M}}(E)-\operatorname{rk}_{\mathrm{M}}(S)$ and $\operatorname{crk}_{\mathrm{N}}(S)=\operatorname{rk}_{\mathrm{N}}(E)-\operatorname{rk}_{\mathrm{N}}(S)$.
\end{definition}

The usual Tutte polynomial of a matroid  is recovered by setting $\mathrm{M} = \mathrm{N}$. 
We define the \emph{multivariate Tutte polynomial} of $\mathrm{M} \twoheadrightarrow \mathrm{N}$ by
\[
\underline{\mathrm{Z}}_{p,q,\mathrm{M} \twoheadrightarrow \mathrm{N}}(w)  \vcentcolon= \sum_{S\subseteq E} p^{-\operatorname{rk}_{\mathrm{M}}(S)}q^{-\operatorname{rk}_{\mathrm{N}}(S)} \prod_{i\in S} w_i,
\]
where $p$ and $q$ are real parameters.
 The Tutte polynomial of $\mathrm{M} \twoheadrightarrow \mathrm{N}$  and the multivariate Tutte polynomial of  $\mathrm{M} \twoheadrightarrow \mathrm{N}$ are related by the change of variables
\[
p = z(y-1), \ \ pq = (x-1)(y-1) \ \ \text{and} \ \ w_i = (y-1) \ \  \text{for all $i \in E$}.
\]
More precisely, with the above values of $p$, $q$ and $w$, we have 
\[
(x-1)^{-\operatorname{rk}_\mathrm{N}(E)} z^{\operatorname{rk}_\mathrm{N}(E)-\operatorname{rk}_\mathrm{M}(E)} \ \mathrm{T}_{\mathrm{M}\twoheadrightarrow \mathrm{N}}(x,y,z) = \underline{\mathrm{Z}}_{p,q, \mathrm{M} \twoheadrightarrow \mathrm{N}}(w).
\]
Theorem \ref{TheoremLorentzian} implies that the multivariate Tutte polynomial 
is log-concave on the positive orthant $\mathbb{R}^E_{>0}$  for any positive real numbers $p \le 1$ and $q \le 1$.

\subsection{}

According to \cite{BGW03}, flag matroids are precisely the Coxeter matroids of type A.
For our purposes, it is most natural to work with flag matroids and their multivariate Tutte polynomials.

\begin{definition}\label{FlagMatroids}
A \emph{flag matroid} $\mathscr M$ is a sequence of matroids $(\mathrm{M}_1, \ldots, \mathrm{M}_\ell)$ of matroids on a common ground set $E$ satisfying the condition
\[
\text{the matroid $\mathrm{M}_k$ is a quotient of $\mathrm{M}_{k+1}$ for all $0<k<\ell$.}
\]
 The matroids $\mathrm{M}_1, \ldots, \mathrm{M}_\ell$ are \emph{constituents} of the flag matroid $\mathscr  M$.
\end{definition}

Our definition of flag matroids, which agrees with \cite[Definition 6.2]{CDMS18}, differs slightly from the one in  \cite[Section 1.7]{BGW03}
 in that we allow $\mathrm{M}_k=\mathrm{M}_{k+1}$.
It is necessary to work with Definition \ref{FlagMatroids} to construct deletion $\mathscr{M} \setminus i$ and contraction $\mathscr{M} / i$   by respective operations on the constituents of $\mathscr{M}$.

\begin{definition}
Let $\mathscr{M}=(\mathrm{M}_1, \ldots, \mathrm{M}_\ell)$ be a flag matroid on $E$, and let
 $i$ be any element of $E$.
\begin{enumerate}[(1)]\itemsep 5pt
\item The \emph{deletion} $\mathscr M \setminus i $ of $\mathscr M$ is the flag matroid
\[
\mathscr M \setminus i = (\mathrm{M}_1\setminus i , \ldots, \mathrm{M}_\ell \setminus i)
\]
where $\mathrm{M}_k\setminus i$ is the matroid on $E \setminus i$ defined by the rank function
\[
\operatorname{rk}_{\mathrm{M}_k\setminus i}(S)=\operatorname{rk}_{\mathrm{M}_k}(S). 
\]
\item The \emph{contraction} $\mathscr M / i $ of $\mathscr M$ is the flag matroid
\[
 \mathscr{M} / i = (\mathrm{M}_1 / i , \ldots, \mathrm{M}_\ell / i),
\]
where  $\mathrm{M}_k/ i$ is the matroid on $E \setminus i$ defined by the rank function
\[
\operatorname{rk}_{\mathrm{M}_k/ i}(S)=\operatorname{rk}_{\mathrm{M}_k}(S \cup i)-\operatorname{rk}_{\mathrm{M}_k}(i).
\]
\end{enumerate}
\end{definition}

It is straightforward to check that $\mathscr M \setminus i $ and $\mathscr M / i$  are flag matroids on $E \setminus i$. 
For the rest of this paper, we identify the ground set $E$ with $[n]$
and write $w_0$ for a variable different from the variables $w_1,\ldots,w_n$.

\begin{definition}\label{defn:tutte}
The \emph{homogenous multivariate Tutte polynomial of a flag matroid} $\mathscr{M}=(\mathrm{M}_1, \ldots, \mathrm{M}_\ell)$ is the homogenous polynomial of degree $n$ in $n+1$ variables
\[
\mathrm{Z}_{q,\mathscr{M}}(w_0,w_1,\ldots,w_n) \vcentcolon= \sum_{S \subseteq [n]} q_1^{-\operatorname{rk}_{\mathrm{M}_1}(S)}  q_2^{-\operatorname{rk}_{\mathrm{M}_2}(S)} \cdots q_\ell^{-\operatorname{rk}_{\mathrm{M}_\ell}(S)}  w_0^{n-|S|}\prod_{i \in S} w_i,
\]
where $q$ stands for the sequence of  real parameters $(q_1,\ldots,q_\ell)$.
\end{definition}

When $\ell=2$ and $w_0=1$, we recover the multivariate Tutte polynomial of a matroid quotient.
 In general, the homogeneous multivariate Tutte polynomial satisfies the \emph{deletion-contraction relation}
\[
\mathrm{Z}_{q,\mathscr  M} = w_0\ \mathrm{Z}_{q,\mathscr M\setminus i } + w_i \ q_1^{-\operatorname{rk}_{\mathrm{M}_1}(i)} q_2^{-\operatorname{rk}_{\mathrm{M}_2}(i)} \cdots q_\ell^{-\operatorname{rk}_{\mathrm{M}_\ell}(i)} \mathrm{Z}_{q,\mathscr M/i} \ \ \text{for any $i \in E$.}
\]
From the deletion-contraction relation, we see that the $i$-th partial derivative  of $\mathrm{Z}_{q,\mathscr  M}$ is 
\[
\frac{\partial}{\partial w_i} \mathrm{Z}_{q,\mathscr  M}=q_1^{-\operatorname{rk}_{\mathrm{M}_1}(i)} q_2^{-\operatorname{rk}_{\mathrm{M}_2}(i)} \cdots q_\ell^{-\operatorname{rk}_{\mathrm{M}_\ell}(i)} \mathrm{Z}_{q,\mathscr M/i}  \ \ \text{for any $i \in E$.}
\]
The above formula 
will play a central role in the inductive proof of 
Theorem \ref{MainTheorem}.

\begin{remark}
The homogeneous multivariate Tutte polynomials
are the \emph{reduced multivariate Tutte characters}
for the minor system of flag matroids with $\ell$ constituents in the sense of \cite{DFM18}.
See \cite{KMT18} for an equivalent theory of \emph{canonical Tutte polynomials} of  minor systems.
\end{remark}

\section{The Lorentzian property}\label{SectionLorentzian}

\subsection{}

In \cite{BH19}, the authors introduce Lorentzian polynomials as a generalization of volume polynomials in algebraic geometry and stable polynomials in optimization theory.  
These polynomials capture the essence of many log-concavity phenomena in combinatorics.   Here we briefly summarize the relevant results.
We write $e_i$ for the $i$-th standard unit vector of $\mathbb{N}^n$ and $\partial_i$ for the differential operator $\frac{\partial}{\partial w_i}$
on the polynomial ring $\mathbb{R}[w_1,\ldots,w_n]$.

\begin{definition}\cite[Definition 2.1]{BH19}
A homogeneous polynomial $h \in \mathbb{R}[w_1,\ldots,w_n]$ of degree $d$ is \emph{strictly Lorentzian} if all of its coefficients are positive and,
for any indices $i_1,\ldots,i_{d-2} \in [n]$, the quadratic form $\partial_{i_1} \cdots \partial_{i_{d-2}}h$ has the Lorentzian signature $(+,-,\ldots,-)$.
\emph{Lorentzian polynomials} are polynomials that can be obtained as a limit of strictly Lorentzian polynomials.
\end{definition}

A subset $\mathrm{J} \subseteq \mathbb{N}^n$ is \emph{$\mathrm{M}$-convex} if, for any index $i \in [n]$ and any vectors $\alpha \in \mathrm{J}$ and $\beta\in \mathrm{J}$ whose $i$-th coordinates satisfy $\alpha_i>\beta_i$, there is an index $j \in [n]$ satisfying
\[
\alpha_j<\beta_j \ \ \text{and} \ \ \alpha-e_i+e_j \in \mathrm{J} \ \ \text{and} \ \ \beta-e_j+e_i \in \mathrm{J}.
\]
The notion of $\mathrm{M}$-convexity forms the basis of discrete convex analysis \cite{Mur03}.
The \emph{support} of $h$ is the set of monomials appearing in $h$, viewed as a subset of $\mathbb{N}^n$.

\begin{theorem}\cite[Theorem 5.1]{BH19}\label{TheoremCharacterization}
A homogeneous polynomial $h \in \mathbb{R}[w_1,\ldots,w_n]$ of degree $d$ is Lorentzian if and only if all of its coefficients are nonnegative, its support is $\mathrm{M}$-convex, and, for any indices 
$i_1,\ldots,i_{d-2} \in [n]$, the quadratic form $\partial_{i_1} \cdots \partial_{i_{d-2}}h$ has at most one positive eigenvalue.
\end{theorem}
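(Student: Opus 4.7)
The plan is to establish the two directions of the equivalence separately, since one is a routine closure argument while the other carries the substantive content.

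For necessity: suppose $h$ is Lorentzian, so $h = \lim h_k$ with each $h_k$ strictly Lorentzian. Nonnegativity of coefficients passes to the limit, yielding (1). For (3), observe that ``a symmetric matrix has at most one positive eigenvalue'' is a closed condition on the space of symmetric matrices (the number of eigenvalues exceeding any positive threshold is upper semicontinuous in matrix entries), so $\partial_{i_1} \cdots \partial_{i_{d-2}} h$ inherits this bound from the strictly Lorentzian signature of the corresponding Hessians of $h_k$. For $\mathrm{M}$-convexity of the support, I would first verify it for strictly Lorentzian polynomials: given $\alpha, \beta \in \operatorname{supp}(h)$ with $\alpha_i > \beta_i$, pick $\gamma \in \mathbb{N}^n$ of degree $d-2$ so that $\partial^\gamma h$ is a quadratic form whose coefficients encode the exchange from $\alpha$ to $\beta$, and use the Lorentzian signature of $\partial^\gamma h$ to force existence of the exchange index $j$. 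Then $\mathrm{M}$-convexity passes to limits by a finitary argument, since supports of approximants eventually stabilize on the support of $h$.

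For sufficiency: given $h$ satisfying (1)--(3), I would produce strictly Lorentzian approximants $h_\varepsilon \to h$ by induction on the degree $d$. The base case $d=2$ is the crux: show that a symmetric $n \times n$ matrix with nonnegative entries, $\mathrm{M}$-convex support (on degree-two exponents), and at most one positive eigenvalue is a limit of symmetric matrices with strictly positive entries and exactly one positive eigenvalue. For $d > 2$, first check that each partial $\partial_i h$ inherits (1)--(3), where $\mathrm{M}$-convexity of $\operatorname{supp}(\partial_i h)$ follows from a standard contraction-type property of $\mathrm{M}$-convex sets; by induction each $\partial_i h$ is Lorentzian. Then combine with the base case, applied to all iterated Hessians $\partial_{i_1} \cdots \partial_{i_{d-2}} h$, and a diagonal perturbation $h + \varepsilon \cdot \sigma$ by a strictly Lorentzian $\sigma$ chosen so that $\operatorname{supp}(\sigma) \supseteq \operatorname{supp}(h)$.

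The main obstacle is the degree-two base case: the naive perturbation $Q + \varepsilon J$ by the all-ones matrix can raise zero eigenvalues of $Q$ into the positive range, destroying the Lorentzian signature. The $\mathrm{M}$-convexity of $\operatorname{supp}(Q)$ is precisely what constrains the kernel of $Q$: it must be aligned with directions along which a suitably chosen perturbation $P$ acts negatively, so that $Q + \varepsilon P$ acquires signature $(+,-,\ldots,-)$ while its off-diagonal entries on the $\mathrm{M}$-convex support become positive. Identifying the right $P$ is the technical heart; it may be done by a spectral perturbation argument exploiting that $\mathrm{M}$-convex supports correspond to integer points of a generalized permutohedron, whose face structure dictates the location of the kernel directions of $Q$. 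This is analogous in spirit to how matroidal combinatorics controls Hodge--Riemann signatures in the work of Adiprasito--Huh--Katz.
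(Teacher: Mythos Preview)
This theorem is not proved in the present paper; it is quoted verbatim from \cite{BH19} as a background result (note the attribution \cite[Theorem 5.1]{BH19} in the statement itself). There is therefore no proof in this paper to compare your proposal against.

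That said, a few remarks on your sketch. In the necessity direction, your argument for $\mathrm{M}$-convexity does not work as written: strictly Lorentzian polynomials have \emph{all} coefficients positive, so their supports are the full simplex $\Delta^d_n$ and never ``stabilize'' on a proper subset. The support of a limit can be arbitrarily smaller than the supports of the approximants, so $\mathrm{M}$-convexity cannot be inherited by the finitary argument you suggest. The route actually taken in \cite{BH19} is the one you hint at but then abandon: deduce the exchange property for $\operatorname{supp}(h)$ directly from conditions (1) and (3), by differentiating down to a carefully chosen quadratic and reading the exchange index off its signature.

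For sufficiency, your perturbation strategy $h + \varepsilon\sigma$ is natural, and you correctly identify the degree-two case as the crux. However, your proposed mechanism---using the face structure of the generalized permutohedron to locate the kernel of $Q$ and choose a perturbation $P$ that pushes it negative---is speculative, and the analogy with Hodge--Riemann relations for matroids is not how \cite{BH19} proceeds. Their argument instead passes through an equivalent characterization of the class $\mathrm{M}^d_n$ (via a normalization operator and an inductive reduction), and shows that this class coincides with the closure of the strictly Lorentzian cone by a connectivity-type argument rather than by exhibiting an explicit perturbation. Your sketch, as it stands, does not supply the missing idea for the base case.
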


For example, a bivariate polynomial $\sum_{k=0}^d a_k w_1^kw_2^{d-k}$ with nonnegative coefficients is Lorentzian if and only if the sequence $a_0,\ldots,a_d$ has no internal zeros\footnote{The sequence $a_0,\ldots,a_d$ has \emph{no internal zeros} if $a_{k_1}a_{k_3} \neq 0 \Longrightarrow a_{k_2} \neq 0$ for all $0 \le k_1 <k_2<k_3\le d$.}
and, for all $0<k<d$, we have
\[
\frac{a_k^2}{{d \choose k}^2} \ge \frac{a_{k-1}}{{d \choose k-1}}\frac{a_{k+1}}{{d \choose k+1}}.
\]

Applications to log-concavity phenomena in combinatorics arise from the following properties of Lorentzian polynomials.
Following \cite{Gur09}, we say that a polynomial $h \in \mathbb{R}[w_1,\ldots,w_n]$  with nonnegative coefficients is \emph{strongly log-concave} if,
for any sequence of indices $i_1,i_2,\ldots  \in [n]$ and any positive integer $k$,
the functions $h$ and  $\partial_{i_1} \cdots \partial_{i_k} h$ are either identically zero or log-concave on the positive orthant $\mathbb{R}^n_{>0}$.

\begin{theorem}\label{TheoremProperties}
Let $h$ and $g$ be homogeneous polynomials in $\mathbb{R}[w_1,\ldots,w_n]$ with nonnegative coefficients.
\begin{enumerate}[(1)]\itemsep 5pt
\item \cite[Theorem 5.3]{BH19} The polynomial $h$   is Lorentzian if and only if $h$ is strongly log-concave.
\item \cite[Theorem 2.10]{BH19} If $h(w)$ is Lorentzian, then $h(Av)$ is Lorentzian for any vector of variables $v=(v_1,\ldots,v_m)$ and any $n \times m$ matrix $A$ with nonnegative entries.
\item \cite[Corollary 5.5]{BH19} If $h$ and $g$ are Lorentzian, then the product $hg$ is Lorentzian.
\end{enumerate}
\end{theorem}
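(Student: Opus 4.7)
The plan is to establish parts (1), (2), (3) in sequence, each relying on Theorem \ref{TheoremCharacterization} to decouple ``nonnegative coefficients'', ``$\mathrm{M}$-convex support'', and the Hessian signature condition. The overall strategy exploits that the Hessian signature condition is stable under taking partial derivatives, so induction on the degree $d$ reduces each statement to the quadratic base case $d=2$.

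For part (1), the base case asks: for a quadratic form $q(w) = \tfrac{1}{2} w^T M w$ with nonnegative entries, is $q$ log-concave on $\mathbb{R}^n_{>0}$ iff $M$ has at most one positive eigenvalue? Directly computing the Hessian of $\log q$ yields
\[
H(\log q) = \tfrac{1}{q} M - \tfrac{1}{q^2}(Mw)(Mw)^T,
\]
and the reverse Cauchy--Schwarz inequality $(u^T M w)^2 \geq (u^T M u)(w^T M w)$ available for Lorentzian forms on their positive cone translates directly into negative semidefiniteness of $H(\log q)$; conversely, two or more positive eigenvalues produce a direction violating this bound. The inductive step from $d$ to $d-1$ is then routine since both properties are preserved by taking partial derivatives, and $\mathrm{M}$-convexity of the support can be extracted from the log-concavity of iterated derivatives combined with nonnegativity of coefficients.

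For part (2), after part (1) is in hand, the preservation of Lorentzian under nonnegative substitution can be argued via the bivariate-restriction criterion: a polynomial $h$ with nonnegative coefficients and $\mathrm{M}$-convex support is Lorentzian iff for every $a, b \in \mathbb{R}^n_{\geq 0}$, the bivariate polynomial $h(ta+sb)$ has coefficients forming a normalized log-concave sequence with no internal zeros. Under substitution $h'(v) = h(Av)$ with $A$ nonnegative, any bivariate restriction $h'(ta'+sb') = h(t(Aa') + s(Ab'))$ is again a bivariate restriction of $h$ in a nonnegative direction, hence log-concave. Preservation of $\mathrm{M}$-convex support uses that supports of nonnegative substitutions of a polynomial with generalized-permutohedral support are themselves generalized-permutohedral. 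Part (3) then follows by the tensor trick: $H(u,v) := h(u)g(v)$ is Lorentzian in $2n$ variables (its Hessian is block-diagonal, inheriting the signature from $h$ and $g$ separately), and applying part (2) with the $2n \times n$ nonnegative matrix yielding the substitution $u = v = w$ recovers $h(w)g(w)$.

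The main obstacle is the forward direction of part (1) when $M$ is rank-deficient, since the reverse Cauchy--Schwarz argument requires careful handling near the boundary of the cone $\{q > 0\}$. My plan is to first treat the strictly Lorentzian case (where $M$ is nonsingular with signature $(+,-,\ldots,-)$) by direct Hessian computation, and then pass to the limit using the definition of Lorentzian as the closure of strictly Lorentzian polynomials; preservation of $\mathrm{M}$-convexity under these limits follows from the fact that the support is a discrete invariant that stabilizes under sufficiently small perturbations of the coefficient vector.
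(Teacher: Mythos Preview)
The paper does not prove Theorem~\ref{TheoremProperties} at all: each part is stated with a citation to \cite{BH19} and used as a black box. There is therefore no ``paper's own proof'' to compare against; you are attempting something the present paper deliberately outsources.

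That said, your sketch contains a concrete error and a structural gap. In part~(3), the quadratic forms obtained from $H(u,v)=h(u)g(v)$ by taking $\deg h + \deg g - 2$ partial derivatives are \emph{not} all block-diagonal: if exactly $\deg h - 1$ of the derivatives are in the $u$-variables and $\deg g - 1$ are in the $v$-variables, the resulting quadratic is a product $L_1(u)L_2(v)$ of two linear forms, whose Hessian has off-diagonal blocks. The argument is salvageable because such a rank-$2$ form has signature $(+,-)$, but you must treat this case separately rather than invoke block-diagonality. One also needs to verify that the support of $H$ is $\mathrm{M}$-convex, which amounts to checking that the direct sum of two $\mathrm{M}$-convex sets is $\mathrm{M}$-convex.

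In part~(1), your inductive step does not work as stated: knowing that every $\partial_i h$ is log-concave on $\mathbb{R}^n_{>0}$ does not by itself imply that $h$ is log-concave there. The actual mechanism in \cite{BH19} is that log-concavity of a homogeneous $h$ at a point $w$ is governed by the signature of $\mathrm{Hess}(h)$ at $w$, which by Euler-type identities equals (up to a positive scalar) the quadratic form $(\sum_i w_i \partial_i)^{d-2} h$; one then needs the nontrivial fact that a nonnegative combination of the coordinate quadratics $\partial_{i_1}\cdots\partial_{i_{d-2}} h$ (each with at most one positive eigenvalue and sharing a common positive vector) again has at most one positive eigenvalue. Your reverse direction also glosses over why strong log-concavity forces the support to be $\mathrm{M}$-convex, which is one of the more delicate points of the theory. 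Finally, your approach to part~(2) via the bivariate-restriction criterion is circular unless you first establish that criterion independently; in \cite{BH19}, Theorem~2.10 is proved directly from the definition of Lorentzian as a closure of strictly Lorentzian polynomials, without appealing to the log-concavity characterization.
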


\subsection{}

We now prove the main result of this paper.
Let $\mathscr{M}=(\mathrm{M}_1,\ldots,\mathrm{M}_\ell)$ be a flag matroid on the ground set $[n]$ with  $n \ge 2$ and $\ell \ge 1$.

\begin{theorem}\label{MainTheorem}
The homogeneous multivariate Tutte polynomial $\mathrm{Z}_{q,\mathscr M}(w_0,w_1,\ldots,w_n)$  is Lorentzian for any positive real numbers $q_1,\ldots,q_\ell \leq 1$.
\end{theorem}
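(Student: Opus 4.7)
The plan is to apply the characterization from Theorem \ref{TheoremCharacterization}, verifying three conditions on $\mathrm{Z}_{q,\mathscr{M}}$: nonnegative coefficients, $\mathrm{M}$-convex support, and the signature condition that every $(n-2)$-fold partial derivative is a quadratic form with at most one positive eigenvalue. Nonnegativity is immediate, since each coefficient $\prod_k q_k^{-\operatorname{rk}_{\mathrm{M}_k}(S)}$ is strictly positive. Because no coefficient vanishes, the support equals $\{(n-|S|,\chi_S) : S \subseteq [n]\} \subseteq \mathbb{N}^{n+1}$, independent of $\mathscr{M}$, and is $\mathrm{M}$-convex by a direct symmetric-exchange check (swapping $e_0$ against any $e_i$).

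For the signature condition I would induct on $n$. The base case $n=2$ reduces to a $3\times 3$ eigenvalue check: the $(w_1,w_2)$-minor has negative determinant, and the full determinant is a positive multiple of $b_1 b_2 - c_{12}$, nonnegative by submodularity together with $q_k\le 1$ (where $b_i$ and $c_{ij}$ are as below). For the inductive step, consider an arbitrary $(n-2)$-fold partial $\partial_{i_1}\cdots\partial_{i_{n-2}}\mathrm{Z}_{q,\mathscr{M}}$. If some $i_j\in [n]$, the partial derivative identity from Section \ref{SectionTutte} gives
\[
\partial_{i_j}\mathrm{Z}_{q,\mathscr{M}} = \Bigl(\prod_k q_k^{-\operatorname{rk}_{\mathrm{M}_k}(i_j)}\Bigr)\,\mathrm{Z}_{q,\mathscr{M}/i_j},
\]
so (after commuting) the full partial is a positive multiple of an iterated derivative of the Tutte polynomial of the flag matroid $\mathscr{M}/i_j$ on $n-1$ elements, which is Lorentzian by induction. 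Since derivatives preserve the Lorentzian property, the resulting degree-$2$ polynomial has at most one positive eigenvalue.

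The remaining case, the pure partial $\partial_0^{n-2}\mathrm{Z}_{q,\mathscr{M}}$, is the heart of the argument: $w_0$ is not a ground-set variable, and no reduction to a smaller flag matroid is available. Using $\partial_0^{n-2}(w_0^{n-|S|}) = \tfrac{(n-|S|)!}{(2-|S|)!}\,w_0^{2-|S|}$, only subsets $S$ with $|S|\le 2$ contribute, yielding
\[
\partial_0^{n-2}\mathrm{Z}_{q,\mathscr{M}} = \tfrac{n!}{2}\,w_0^2 + (n-1)!\sum_{i=1}^{n} b_i\, w_0 w_i + (n-2)!\sum_{i<j} c_{ij}\, w_i w_j,
\]
where $b_i = \prod_k q_k^{-\operatorname{rk}_{\mathrm{M}_k}(i)}$ and $c_{ij} = \prod_k q_k^{-\operatorname{rk}_{\mathrm{M}_k}(\{i,j\})}$. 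Completing the square in $w_0$ reduces the signature condition to positive semidefiniteness, in the variables $w_1,\ldots,w_n$, of the $n\times n$ quadratic form
\[
\tfrac{n-1}{2n}\Bigl(\sum_i b_i w_i\Bigr)^{\!2} - \sum_{i<j} c_{ij}\,w_i w_j.
\]
Submodularity alone yields only the pointwise estimate $c_{ij}\le b_i b_j$, and a small explicit example (with $\tilde c_{ij}:=c_{ij}/(b_ib_j)$ constrained only to lie in $(0,1]$) shows this is not enough by itself. The key additional input is the matroid-quotient structure recorded in Lemma \ref{lem:matquot}, which controls how non-loop parallel pairs may degenerate into loops as one passes down the flag. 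I expect this positive-semidefiniteness step to be the principal obstacle. A natural strategy is to first reduce to the rank-$2$ truncation $\mathscr{M}^{(2)}=(\mathrm{M}_1^{(2)},\ldots,\mathrm{M}_\ell^{(2)})$, whose rank data on subsets of size at most two agree with those of $\mathscr{M}$, and then verify the inequality by a direct analysis of loops and parallel classes in rank-$2$ flag matroids.
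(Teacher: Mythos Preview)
Your overall architecture matches the paper's proof exactly: apply Theorem~\ref{TheoremCharacterization}, dispose of the partial derivatives involving some $\partial_i$ with $i\in[n]$ via the contraction identity and induction on $n$, and isolate $\partial_0^{\,n-2}\mathrm{Z}_{q,\mathscr M}$ as the crux. Your rescaling $\tilde c_{ij}=c_{ij}/(b_ib_j)$ is also the paper's substitution $w_i\mapsto \bigl(\prod_k q_k^{\operatorname{rk}_{\mathrm{M}_k}(i)}\bigr)w_i$: after it, the target inequality becomes
\[
\frac{1}{2}\Bigl(1-\frac{1}{n}\Bigr)(w_1+\cdots+w_n)^2 \ \ge\ \sum_{i<j}\Bigl(\prod_k q_k^{\,d_k(i,j)}\Bigr) w_iw_j,
\]
where $d_k(i,j)=\operatorname{rk}_{\mathrm{M}_k}(i)+\operatorname{rk}_{\mathrm{M}_k}(j)-\operatorname{rk}_{\mathrm{M}_k}(i,j)\in\{0,1\}$ records whether $i,j$ are parallel in $\mathrm{M}_k$. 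This is precisely Lemma~\ref{MainLemma}.

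What your proposal does not yet contain is a mechanism for proving this inequality; ``direct analysis of loops and parallel classes in rank-$2$ flag matroids'' is where all the content lies, and as you note, the naive bound $\tilde c_{ij}\le 1$ is insufficient. The paper's device is short and worth knowing: the right-hand side above is \emph{affine} in each parameter $q_k$ separately (since $d_k(i,j)\in\{0,1\}$), so its maximum over $q\in[0,1]^\ell$ is attained at a vertex, and one may assume $q_\ell\in\{0,1\}$. If $q_\ell=1$, or if $\mathrm{M}_\ell$ has no parallel pair, the parameter $q_\ell$ drops out and one inducts on $\ell$; the base case is Cauchy--Schwarz for $(1,\ldots,1)$ and $(w_1,\ldots,w_n)$. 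If $q_\ell=0$ and $\mathrm{M}_\ell$ has a parallel pair, say $\{1,2\}$, then the $w_1w_2$ term vanishes, and Lemma~\ref{lem:matquot} forces $d_k(1,i)=d_k(2,i)$ for all $k$ and all $i\neq 1,2$; hence the right-hand side equals the corresponding form for $\mathscr{M}\setminus 1$ under $w_2\mapsto w_1+w_2$, and one inducts on $n$. This double induction on $(n,\ell)$, enabled by the affine-in-$q_k$ observation, is the missing step.
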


For any element $k$ of $[\ell]$ and distinct elements $i$, $j$ of $[n]$, we set
\[
d_k(i,j)\vcentcolon=\operatorname{rk}_{\mathrm{M}_k}(i)+\operatorname{rk}_{\mathrm{M}_k}(j)-\operatorname{rk}_{\mathrm{M}_k}(i,j)=
 \left\{
\begin{array}{ll}
1 & \text{if $i$ and $j$ are parallel in $\mathrm{M}_k$,}\\
0 & \text{if $i$ and $j$ are not parallel in $\mathrm{M}_k$.}
\end{array}\right. 
\]
In other words, $d_k$ is the indicator function for the two-element circuits of $\mathrm{M}_k$.
We write $\mathrm{P}_\mathscr{M}$ for the multi-affine polynomial
\[
\mathrm{P}_{\mathscr M}(q,w)
\vcentcolon=
\sum_{1\leq i< j \leq n} 
q_1^{
d_{1}(i,j)
}
q_2^{
d_{2}(i,j)
}
\cdots 
q_\ell^{
d_{\ell}(i,j)
}
w_iw_j,
\]
where
$q=(q_1,\ldots,q_\ell)$ and
$w=(w_1,\ldots,w_n)$.
Note that  $\mathrm{P}_\mathscr{M}$  depends only on the rank $2$ truncations of the constituents of $\mathscr{M}$.

\begin{lemma}\label{MainLemma}
For any real numbers $w_1,\ldots,w_n$ and any nonnegative real numbers $q_1,\ldots,q_\ell \le 1$,
\[
\frac{1}{2}\Big(1-\frac{1}{n}\Big) (w_1+\cdots+ w_n)^2 \ge   \mathrm{P}_{\mathscr M}(q,w).
\]
\end{lemma}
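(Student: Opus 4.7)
The plan is to induct on the length $\ell$ of the flag, with the parallel-class decomposition of $\mathrm{M}_1$ organizing the argument at both the base and inductive steps.

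\emph{Base case $\ell = 1$.} Let $L_1$ be the loops of $\mathrm{M}_1$ and $C_1, \dots, C_r$ its parallel classes, with $c_t := |C_t|$. Set $S_t := \sum_{i \in C_t} w_i$ and $W := \sum_i w_i$. Expanding $\mathrm{P}_\mathscr{M}$ (using that the coefficient of $w_i w_j$ equals $q_1$ when $i, j$ share a class $C_t$ and equals $1$ otherwise) reduces the inequality to
\[
\tfrac{1}{n} W^2 \;\le\; \sum_{i \in L_1} w_i^2 \;+\; q_1 \sum_t \sum_{i \in C_t} w_i^2 \;+\; (1-q_1) \sum_t S_t^2.
\]
The inner Cauchy--Schwarz bound $\sum_{i \in C_t} w_i^2 \ge S_t^2/c_t$ combined with the $(1-q_1) S_t^2$ term gives a lower bound of $\sum_{i \in L_1} w_i^2 + \sum_t S_t^2/\mu_t$, where $\mu_t := c_t/(c_t - (c_t-1) q_1) \in [1, c_t]$ for $q_1 \in [0,1]$. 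Applying weighted Cauchy--Schwarz to $W = \sum_{i \in L_1} w_i + \sum_t S_t$ with weights $1$ and $\mu_t$, and using $|L_1| + \sum_t \mu_t \le |L_1| + \sum_t c_t = n$, concludes the base case.

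\emph{Inductive step $\ell \ge 2$.} Partition the pairs $\{i, j\}$ into: (a) both in some $C_t$; (b) both in $L_1$; (c) all other pairs. Iterating Lemma \ref{lem:matquot} along the flag shows that each parallel class of $\mathrm{M}_k$ with $k \ge 2$ is contained either in a single parallel class of $\mathrm{M}_1$ or entirely in $L_1$. Hence pairs of type (c) are parallel in no $\mathrm{M}_k$, so their coefficient in $\mathrm{P}_\mathscr{M}$ is $1$. For pairs of type (a) the coefficient factors as $q_1 \cdot \prod_{k \ge 2} q_k^{d_k(i,j)}$, which is $q_1$ times the coefficient in the restricted flag $\tilde{\mathscr{M}}_t := (\mathrm{M}_2|_{C_t}, \dots, \mathrm{M}_\ell|_{C_t})$; this is again a flag matroid since restriction preserves quotients. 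An analogous statement holds for type (b) with the restriction of $\mathscr{M}$ to $L_1$.

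Applying the induction hypothesis to each $\tilde{\mathscr{M}}_t$ and to the restriction on $L_1$, and combining with the identity $\sum_{\text{type (c)}} w_i w_j = \tfrac{1}{2}(W^2 - \sum_t S_t^2 - U^2)$ where $U := \sum_{i \in L_1} w_i$, one arrives at
\[
\mathrm{P}_\mathscr{M}(q, w) \;\le\; \tfrac{1}{2} W^2 \;-\; \sum_t \tfrac{|C_t| - (|C_t|-1) q_1}{2|C_t|} S_t^2 \;-\; \tfrac{1}{2|L_1|} U^2.
\]
The claim then reduces to $\tfrac{1}{n} W^2 \le \sum_t S_t^2/\mu_t + U^2/|L_1|$, which follows by a weighted Cauchy--Schwarz on $W = U + \sum_t S_t$ with weights $|L_1|$ and $\mu_t$, using $|L_1| + \sum_t \mu_t \le n$ as before.

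\emph{Main obstacle.} The crux is the structural observation that type-(c) pairs contribute with coefficient $1$; this is what powers the inductive reduction, and it rests on Lemma \ref{lem:matquot} iterated along the flag. Once this is in place, the bookkeeping with Cauchy--Schwarz weights $\mu_t$ is essentially forced by the requirement that the weights sum to at most $n$.
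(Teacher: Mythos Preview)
Your proof is correct and takes a genuinely different route from the paper. The paper argues by double induction on $(n,\ell)$: linearity of $\mathrm{P}_{\mathscr M}$ in $q_\ell$ reduces to $q_\ell\in\{0,1\}$, after which either the last constituent $\mathrm{M}_\ell$ drops out (reducing $\ell$) or a parallel pair in $\mathrm{M}_\ell$ can be merged into a single element (reducing $n$), with the Cauchy--Schwarz inequality $\sum_i w_i^2 \ge W^2/n$ as the only analytic input. You instead induct only on $\ell$, organizing everything around the parallel-class decomposition of the \emph{bottom} constituent $\mathrm{M}_1$; the same quotient lemma is invoked, but now to show that parallel classes of every $\mathrm{M}_k$ refine those of $\mathrm{M}_1$ (or lie in $L_1$), so that cross-block pairs carry coefficient $1$ and the problem decouples into independent inductive instances on the blocks, glued by a weighted Cauchy--Schwarz with the explicit weights $\mu_t=c_t/(c_t-(c_t-1)q_1)$. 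The paper's version is shorter and avoids all bookkeeping by pushing to extreme values of $q_\ell$; yours is more quantitative, the weights $\mu_t$ displaying exactly how much slack each parallel class contributes as $q_1$ varies, and the single induction on $\ell$ lets the flag structure do the work directly. Two cosmetic points for a final write-up: when $L_1=\varnothing$ simply omit the $U^2/|L_1|$ term, and note that blocks of size at most $1$ contribute trivially so that the inductive hypothesis is only invoked on ground sets of size $\ge 2$.
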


\begin{proof}
We prove the statement by induction on $(n,\ell)$. 
The case $(2,\ell)$ is straightforward:
\[
\frac{1}{4}(w_1+w_2)^2 \ge \frac{1}{4}q_1^{d_1(1,2)}\ldots q_\ell^{d_\ell(1,2)}(w_1+w_2)^2  \ge q_1^{d_1(1,2)}\ldots q_\ell^{d_\ell(1,2)} w_1w_2.
\]
Since  $\mathrm{P}_\mathscr{M}$ is a linear in the parameter $q_\ell$,
we may suppose that $q_\ell$ is $0$ or $1$.\footnote{Of course, in this context, $0^0=1$.}
Therefore, the following five special cases imply the general case:
\begin{enumerate}[(1)]\itemsep 5pt
\item when $q_\ell=1$ and $\ell=1$; 
\item when $q_\ell=1$ and $\ell>1$; 
\item when $q_\ell=0$ and $\mathrm{M}_\ell$ has a pair of parallel elements, say $1$ and $2$;
\item when $\mathrm{M}_\ell$ has no pair of parallel elements and $\ell=1$;
\item when $\mathrm{M}_\ell$ has no pair of parallel elements and $\ell>1$.
\end{enumerate}

In cases (1) and (4), we need to show
\[
\frac{1}{2}\Big(1-\frac{1}{n}\Big)  (w_1+\cdots+ w_n)^2 \ge  \sum_{1 \le i<j \le n} w_iw_j. 
\]
The above displayed inequality is equivalent to the statement
\[
n(w_1^2+\cdots+ w_n^2) \ge (w_1+\cdots+w_n)^2,
\]
which is the Cauchy-Schwarz inequality for the vectors $(1, . . . , 1)$ and $(w_1, . . . , w_n)$.

In cases (2) and (5), we have
\[
\mathrm{P}_{\mathscr M}(q,w)=\mathrm{P}_{\mathscr N}(q_1,\ldots,q_{\ell-1},w_1,\ldots,w_n),
\]
where $\mathscr{N}$ is the flag matroid with constituents $\mathrm{M}_1,\ldots,\mathrm{M}_{\ell-1}$.
We use  induction on $\ell$.

In case (3), where  $d_\ell(1,2)=1$, 
Lemma \ref{lem:matquot} implies that
\[
d_k(1,i)=d_k(2,i) \ \ \text{for all $i \neq 1,2$ and all $k\neq \ell$}.
\]
Since $q_\ell=0$, the support of $\mathrm{P}_\mathrm{M}$ does not contain $w_1w_2$,
and hence the above shows
\[
\mathrm{P}_\mathscr{M}(q,w)
=\mathrm{P}_{\mathscr{M}\setminus 1}(q_1,\ldots,q_{\ell},w_1+w_2,w_3,\ldots,w_n).
\]
Therefore, by induction on $n$, we have
\[
\frac{1}{2}\Big(1-\frac{1}{n}\Big)  (w_1+\cdots+ w_n)^2 \ge \frac{1}{2}\Big(1-\frac{1}{n-1}\Big)   (w_1+\cdots+ w_n)^2\ge \mathrm{P}_\mathscr{M}(q,w).
\qedhere
\]
\end{proof}

\begin{proof}[Proof of Theorem \ref{MainTheorem}]
We prove the statement by induction on $n$, using Theorem \ref{TheoremCharacterization}.
It is straightforward to check directly that the support of the homogeneous multivariate Tutte polynomial is $\mathrm{M}$-convex whenever $q_1,\ldots,q_\ell$ are positive.\footnote{Alternatively, we may use that $\mathrm{Z}_{\mathbf{1},\mathscr M}=\prod_{i=1}^n (w_0+w_i)$ is a Lorentzian polynomial by Theorem \ref{TheoremProperties} (3).}


As before, we write $w=(w_1,\ldots,w_n)$. 
We first show that the quadratic form
\[
\frac{\partial^{n-2}}{\partial w_0^{n-2}}
\mathrm{Z}_{q,\mathscr M}=\frac{n!}{2}w_0^2+(n-1)!\hspace{0.5mm} \mathrm{Z}^{(1)}_{q,\mathscr M}(w) \ w_0+(n-2)!\hspace{0.5mm} \mathrm{Z}^{(2)}_{q,\mathscr M}(w)
\]
has at most one positive eigenvalue for positive parameters $q_1,\ldots,q_\ell \le 1$.
For this, it suffices to show that the Schur complement of the first principal minor is negative semidefinite. In other words,
the discriminant of the displayed quadratic form with respect to $w_0$ is nonnegative:
\[
\frac{1}{2} \Big(1-\frac{1}{n}\Big)\mathrm{Z}_{q,\mathscr M}^{(1)}(w)^2 \geq \mathrm{Z}_{q,\mathscr M}^{(2)}(w) \ \  \text{for all $w\in \mathbb{R}^n$}.
\]
We prove the discriminant inequality after making the invertible change of variables
\[
w_i \longmapsto q_1^{\operatorname{rk}_{\mathrm{M}_1}(i)} \cdots q_\ell^{\operatorname{rk}_{\mathrm{M}_\ell}(i)} w_i \ \ \text{for all $i \in [n]$.}
\]
The inequality then becomes that of Lemma \ref{MainLemma}:
\[
\frac{1}{2}\Big(1-\frac{1}{n}\Big) (w_1+\cdots+ w_n)^2 \ge   \mathrm{P}_{\mathscr M}(q,w).
\]

It is now enough to show that the $i$-th partial derivative of the homogeneous multivariate Tutte polynomial is Lorentzian for any $i \in [n]$  and any positive $q_1,\ldots,q_\ell\le 1$.
This follows from the induction hypothesis on $n$ and the identity
\[
\partial_i \mathrm{Z}_{q,\mathscr  M}=q_1^{-\operatorname{rk}_{\mathrm{M}_1}(i)} q_2^{-\operatorname{rk}_{\mathrm{M}_2}(i)} \cdots q_\ell^{-\operatorname{rk}_{\mathrm{M}_\ell}(i)} \mathrm{Z}_{q,\mathscr M/i}. 
\qedhere
\]
\end{proof}

Notice that the proof of Theorem \ref{MainTheorem} entirely consists of, apart from the routine induction on $n$,
analysis of rank $2$ matroids in Lemma \ref{MainLemma}.

\begin{proof}[Proof of Theorem \ref{TheoremLorentzian}]
Apply Theorem \ref{MainTheorem} to the flag matroid $(f^{-1}(\mathrm{N}),\mathrm{M})$.
\end{proof}

Let $\mathrm{M}$ be a matroid on $[n]$, and let $\mathrm{N}$ be a matroid on $[m]$.

\begin{corollary}\label{CorollaryBasisGenerating}
The \emph{homogeneous basis generating polynomial}
\[
\mathrm{B}_f(w_0,w_1,\ldots,w_n)\vcentcolon=\sum_{S \in \mathscr{B}(f)} w_0^{n-|S|} \prod_{i \in S} w_i
\]
is a Lorentzian polynomial
for any morphism of matroids  $f:\mathrm{M} \to \mathrm{N}$.
\end{corollary}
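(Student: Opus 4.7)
The plan is to realize $\mathrm{B}_f$ as a limit, under a nonnegative linear substitution, of homogeneous multivariate Tutte polynomials of the flag matroid $\mathscr{M} = (f^{-1}(\mathrm{N}), \mathrm{M})$, and then invoke closure of the Lorentzian cone under limits. This is the natural two-parameter analogue of the classical limits $\lim_{q \to 0} \underline{\mathrm{Z}}_{q,\mathrm{M}}(qw)$ and $\lim_{q \to 0} q^{\operatorname{rk}_\mathrm{M}(E)} \underline{\mathrm{Z}}_{q,\mathrm{M}}(w)$ recalled in Section \ref{SectionTutte}, combined so as to simultaneously extract the independence condition in $\mathrm{M}$ and the spanning condition in $f^{-1}(\mathrm{N})$ that together characterize $\mathscr{B}(f)$.

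First, I would invoke the lemma from Section \ref{SectionMorphism} identifying $\mathscr{B}(f)$ with $\mathscr{B}(\mathrm{M} \twoheadrightarrow f^{-1}(\mathrm{N}))$, so that $S \in \mathscr{B}(f)$ iff $\operatorname{rk}_\mathrm{M}(S) = |S|$ and $\operatorname{rk}_{f^{-1}(\mathrm{N})}(S) = r$, where $r \vcentcolon= \operatorname{rk}_{f^{-1}(\mathrm{N})}([n])$. Setting $\mathscr{M} = (f^{-1}(\mathrm{N}), \mathrm{M})$, which is a genuine flag matroid since $f^{-1}(\mathrm{N})$ is a quotient of $\mathrm{M}$, I would consider for $q_1, q_2 \in (0, 1]$ the polynomial
\[
h_{q_1, q_2}(w_0, w) \vcentcolon= q_1^r \, \mathrm{Z}_{(q_1, q_2), \mathscr{M}}(w_0, q_2 w_1, \ldots, q_2 w_n) = \sum_{S \subseteq [n]} q_1^{r - \operatorname{rk}_{f^{-1}(\mathrm{N})}(S)} \, q_2^{|S| - \operatorname{rk}_\mathrm{M}(S)} \, w_0^{n-|S|} \prod_{i \in S} w_i.
\]
Both exponents are nonnegative for every $S$, so letting $q_1, q_2 \to 0^+$ sends each coefficient to $1$ if $S \in \mathscr{B}(f)$ and to $0$ otherwise; hence $h_{q_1, q_2} \to \mathrm{B}_f$ coefficient-wise.

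Next, I would verify that each $h_{q_1, q_2}$ is Lorentzian. Theorem \ref{MainTheorem} gives that $\mathrm{Z}_{(q_1, q_2), \mathscr{M}}$ is Lorentzian for $q_1, q_2 \in (0, 1]$. The substitution $(w_0, w_1, \ldots, w_n) \mapsto (w_0, q_2 w_1, \ldots, q_2 w_n)$ is linear with nonnegative diagonal matrix, so by Theorem \ref{TheoremProperties}(2) it preserves the Lorentzian property; scaling by the nonnegative constant $q_1^r$ does so as well. Since the set of Lorentzian polynomials of a fixed degree is closed, the limit $\mathrm{B}_f$ is Lorentzian.

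There is no real obstacle beyond the bookkeeping above: all of the analytic and combinatorial content has already been concentrated in Theorem \ref{MainTheorem}. The only mildly subtle point is the degenerate case in which $f$ admits no bases, in which $\mathrm{B}_f \equiv 0$; this is harmless because the zero polynomial is Lorentzian, arising as $\lim_{\epsilon \to 0^+} \epsilon\cdot p$ for any strictly Lorentzian $p$ of the same degree.
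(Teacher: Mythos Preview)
Your proposal is correct and follows essentially the same route as the paper. The paper's proof invokes Theorem~\ref{TheoremLorentzian} (itself just Theorem~\ref{MainTheorem} applied to the flag $(f^{-1}(\mathrm{N}),\mathrm{M})$) and then passes to the limit $\lim_{p\to 0}\lim_{q\to 0} q^{\operatorname{rk}_\mathrm{N} f[n]}\,\mathrm{Z}_{p,q,f}(w_0,pw_1,\ldots,pw_n)$, which is the same rescaling-and-limit maneuver you carry out; your version is slightly more explicit in citing Theorem~\ref{TheoremProperties}(2) for the diagonal substitution and in separately disposing of the case $\mathscr{B}(f)=\varnothing$.
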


One recovers the Lorentzian property of the basis generating polynomial of $\mathrm{M}$ \cite[Section 7]{BH19} from the case $f=\text{id}_\mathrm{M}$ by taking the partial derivative $\big(\frac{\partial}{\partial w_0}\big)^{n-\operatorname{rk}_\mathrm{M}[n]}$.
One recovers the Lorentzian property of the homogeneous independent set generating polynomial of $\mathrm{M}$ \cite[Section 11]{BH19} from the case $\mathrm{N}=\mathrm{U}_{0,1}$.\footnote{These important special cases  were obtained independently in \cite{AOGV18,ALOGV18b}. See \cite{AOGV18,ALOGV18a} for algorithmic applications.}

\begin{proof}
By Theorem \ref{TheoremLorentzian},
the homogeneous multivariate Tutte polynomial
\[
\mathrm{Z}_{p,q,f}(w_0, w_1, \ldots, w_n) = \sum_{S\subseteq [n]} p^{-\operatorname{rk}_{\mathrm{M}}(S)}q^{-\operatorname{rk}_{\mathrm{N}}(f(S))} w_0^{n-|S|}\prod_{i\in S} w_i,
\]
is a Lorentzian polynomial for any positive real numbers $p \le 1$ and $q \le 1$.
Therefore, the limit
\[
\lim_{p \to 0} \lim_{q \to 0} \ q^{\operatorname{rk}_\mathrm{N} f[n]} \ \mathrm{Z}_{p,q,f}(w_0, pw_1, \ldots, pw_n) =\sum_{S \in \mathscr{B}(f)} w_0^{n-|S|} \prod_{i \in S} w_i
\]
is a Lorentzian polynomial.
\end{proof}

\begin{proof}[Proofs of Theorems \ref{Continuous} and \ref{Discrete}]
Theorem \ref{TheoremProperties} (1) and Corollary \ref{CorollaryBasisGenerating} show that
\[
\text{the polynomial $\mathrm{B}_f(1,w_1,\ldots,w_n)=\underline{\mathrm{B}}_f(w_1,\ldots,w_n)$ is strongly log-concave.}
 \]
Theorem \ref{TheoremProperties} (2) and Corollary \ref{CorollaryBasisGenerating} show that
\[
\text{the polynomial $\mathrm{B}_f(w_0,w_1,\ldots,w_1)=\sum_{k=0}^n \mathrm{b}_k(f) w_0^{n-k}w_1^k$ is Lorentzian,}
\]
which, by Theorem \ref{TheoremCharacterization},  is equivalent to the condition
\[
 \text{$\frac{\mathrm{b}_0(f)}{ {n \choose 0}},\frac{\mathrm{b}_1(f)}{ {n \choose 1}},\ldots,\frac{\mathrm{b}_n(f)}{ {n \choose n}}$ is a log-concave sequence with no internal zeros.}
 \]
This proves Theorems \ref{Continuous} and \ref{Discrete}.
\end{proof}

\section{Problems}\label{SectionProblems}


\subsection{} 

We may define the homogeneous multivariate Tutte polynomial of $\operatorname{r}:2^{[n]} \to \mathbb{R}$ by
\[
\mathrm{Z}_{p,\operatorname{r}}(w_0,w_1,\ldots,w_n) \vcentcolon=\sum_{S \subseteq [n]} p^{-\operatorname{r}(S)} w_0^{n-|S|}\prod_{i \in S} w_i,
\]
where $p$ is a real parameter.\footnote{Compare the notion of universal Tutte character for submodular functions \cite[Section 8.2]{DFM18} and its multivariate version \cite[Section 4.3]{DFM18}.}
We consider the set of functions on $2^{[n]}$ with the Lorentzian property
\[
\mathscr{L}_n \vcentcolon=\Big\{\operatorname{r}:2^{[n]} \to \mathbb{R}  \mid \text{$\mathrm{Z}_{p,\operatorname{r}}$ is Lorentzian for any positive $p \le 1$}\Big\}.
\]
Which functions $\operatorname{r}:2^{[n]} \to \mathbb{R}$ belong to the set $\mathscr{L}_n$?

\begin{proposition}\label{PropositionFlagMatroid}
Let $\mathscr{M}=(\mathrm{M}_1,\ldots,\mathrm{M}_\ell)$ be a flag matroid on $[n]$.
For any
 real number $c_0$ and nonnegative real numbers $c_1,\ldots,c_\ell$, 
we have
\[
c_0+c_1  \operatorname{rk}_{\mathrm{M}_1}+\cdots+c_\ell  \operatorname{rk}_{\mathrm{M}_\ell} \in \mathscr{L}_n. 
\]
\end{proposition}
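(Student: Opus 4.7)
The plan is to reduce Proposition \ref{PropositionFlagMatroid} to Theorem \ref{MainTheorem} by a simple change of parameters, observing that the role played by the multi-parameter $q = (q_1, \ldots, q_\ell)$ in a flag matroid Tutte polynomial is precisely what a single parameter $p$ with exponents $c_1, \ldots, c_\ell$ achieves after taking logarithms.

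First I would write $\operatorname{r} = c_0 + c_1 \operatorname{rk}_{\mathrm{M}_1} + \cdots + c_\ell \operatorname{rk}_{\mathrm{M}_\ell}$ and factor the exponential:
\[
p^{-\operatorname{r}(S)} = p^{-c_0} \prod_{k=1}^\ell p^{-c_k \operatorname{rk}_{\mathrm{M}_k}(S)} = p^{-c_0} \prod_{k=1}^\ell q_k^{-\operatorname{rk}_{\mathrm{M}_k}(S)},
\]
where I set $q_k \vcentcolon= p^{c_k}$. Substituting into the definition of $\mathrm{Z}_{p,\operatorname{r}}$ gives the identity
\[
\mathrm{Z}_{p,\operatorname{r}}(w_0, w_1, \ldots, w_n) = p^{-c_0}\, \mathrm{Z}_{q, \mathscr{M}}(w_0, w_1, \ldots, w_n).
\]

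Next I would verify that the new parameters lie in the range allowed by Theorem \ref{MainTheorem}. Since $p \in (0,1]$ and each $c_k \ge 0$, we have $q_k = p^{c_k} \in (0,1]$ for every $k$ (with the convention $p^0 = 1$). Therefore Theorem \ref{MainTheorem} applies to $\mathrm{Z}_{q, \mathscr{M}}$ and shows it is Lorentzian. Finally, multiplication by the positive scalar $p^{-c_0}$ preserves the Lorentzian property (it scales all Hessian eigenvalues of every partial derivative by a positive constant), so $\mathrm{Z}_{p,\operatorname{r}}$ is Lorentzian as well.

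There is no real obstacle here beyond recognizing the change of variables: all of the analytic and combinatorial content has already been absorbed into Theorem \ref{MainTheorem}, and the present proposition is in essence a restatement of it in a slightly different bookkeeping, trading an $\ell$-tuple of parameters for a single parameter with prescribed exponents. The hypothesis that the $c_k$ are nonnegative is exactly what is needed to keep $q_k \le 1$, which is the regime in which Lemma \ref{MainLemma} was proved.
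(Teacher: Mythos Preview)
Your proof is correct and follows the same approach as the paper: the paper's proof simply asserts that Theorem \ref{MainTheorem} handles the case $c_0=0$ (implicitly via the substitution $q_k=p^{c_k}$ that you spell out) and then observes that adding $c_0$ only multiplies $\mathrm{Z}_{p,\operatorname{r}}$ by the positive scalar $p^{-c_0}$. Your version is just a more explicit rendering of the same argument.
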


\begin{proof}
Theorem \ref{MainTheorem} proves the statement when $c_0$ is zero.
For the general case, note that the
 homogeneous multivariate Tutte polynomial of $c_0+\operatorname{r}$
 is a positive multiple of $\mathrm{Z}_{p,\operatorname{r}}$ for any positive $p$ and $\operatorname{r}:2^{[n]} \to \mathbb{R}$.
\end{proof}

\begin{remark}
A \emph{polymatroid} on $[n]$ is a function $\operatorname{rk}:2^{[n]} \to \mathbb{R}$ satisfying the following conditions \cite[Chapter 18]{Wel76}:
\begin{enumerate}[--]\itemsep 5pt
\item When $S=\varnothing$, we have $\operatorname{rk}(S) =0$.
\item When $S_1 \subseteq S_2  \subseteq [n]$,  we have
$\operatorname{rk}(S_1) \le  \operatorname{rk}(S_2)$.
\item When $S_1\subseteq [n]$, $S_2 \subseteq [n]$, we have
$\operatorname{rk}(S_1 \cup S_2)+ \operatorname{rk}(S_1\cap S_2) \le  \operatorname{rk}(S_1)+ \operatorname{rk}(S_2)$.
\end{enumerate}
For example, nonnegative linear combinations of the rank functions of the constituents of a flag matroid are polymatroids.
However,  a polymatroid on $[n]$ need not be in  $\mathscr{L}_n$.
\end{remark}

\begin{remark}
Let $\mathrm{M}$ and $\mathrm{N}$ be matroids on $[n]$.
The identity function of $[n]$ is said to be a \emph{weak map} from $\mathrm{M}$ to $\mathrm{N}$ if
any one of the following equivalent conditions hold  \cite{KN86}:
\begin{enumerate}[--]\itemsep 5pt
\item For any $S \subseteq [n]$, we have $\operatorname{rk}_{\mathrm{N}}(S) \le \operatorname{rk}_{\mathrm{M}}(S)$.
\item Every independent set of $\mathrm{N}$ is an independent set of $\mathrm{M}$.
\item Every circuit of $\mathrm{M}$ contains a circuit of $\mathrm{N}$.
\end{enumerate}
For example, the identity function of $[n]$ is a weak map from $\mathrm{M}$ to $\mathrm{N}$ when $(\mathrm{N},\mathrm{M})$ is a flag matroid. 
However, nonnegative linear combinations of the rank functions of $\mathrm{M}$ and $\mathrm{N}$ need not be in $\mathscr{L}_n$ when  the identity function of $[n]$ is a weak map  from $\mathrm{M}$ to $\mathrm{N}$.
\end{remark}

\begin{example}\label{Counterexample}
Let  $\mathrm{M}$ and $\mathrm{N}$ be matroids on $[3]$ with the sets of bases
\[
\mathscr{B}(\mathrm{M})=\{\{1,2\},\{1,3\}\}
\ \ \text{and} \ \
\mathscr{B}(\mathrm{N})=\{\{1\},\{2\}\}.
\]
The function
 $\operatorname{r}\vcentcolon=\operatorname{rk}_\mathrm{M}+\operatorname{rk}_\mathrm{N}$
is a polymatroid and the identity function of $[3]$ is a weak map from $\mathrm{M}$ to $\mathrm{N}$.
The homogeneous multivariate Tutte polynomial of $\operatorname{r}$
satisfies
\[
\lim_{p \to 0} \mathrm{Z}_{p,\operatorname{r}}(1,p^2w_1,p^2w_2,pw_3)=1+w_1+w_2+w_3+w_1w_3.
\]
The  right-hand side is not log-concave around $(w_1,w_2,w_3)=(1,1,1)$,
and hence $\operatorname{r}$ is not in $\mathscr{L}_3$.
\end{example}

The notion of $\mathrm{M}^\natural$-concavity, which equivalent to the gross substitutes property in mathematical economics \cite{RvGP02}, plays a central role in discrete convex analysis  \cite[Chapter 6]{Mur03}.
According to the  characterization  in \cite{FY03},
a function $\operatorname{r}:2^{[n]} \to \mathbb{R}$ is \emph{$\mathrm{M}^\natural$-concave} if and only if the following conditions are satisfied:
\begin{enumerate}[(1)]\itemsep 5pt
\item For any $S \subseteq [n]$ and any distinct $i,j \in [n]$, we have
\[
\operatorname{r}(S \cup i \cup j)+\operatorname{r}(S) \le \operatorname{r}(S \cup i)+\operatorname{r}(S \cup j).
\]
\item For any $S \subseteq [n]$ and any distinct $i,j,k \in [n]$,
the maximum among the three values 
\[
\operatorname{r}(S \cup j \cup k)+\operatorname{r}(S \cup i), \ \ 
\operatorname{r}(S \cup i \cup k)+\operatorname{r}(S \cup j), \ \ 
\operatorname{r}(S \cup i \cup j)+\operatorname{r}(S \cup k)
\]
is attained by at least two of them.
\end{enumerate}
For example,  the function $\operatorname{r}$ in Example \ref{Counterexample} is $\mathrm{M}^\natural$-concave.

\begin{proposition}\label{PropositionConcave}
Any function in  $\mathscr{L}_n$ is  $\mathrm{M}^\natural$-concave.
In particular, $\mathscr{L}_n$ is contained in the cone of submodular functions on $2^{[n]}$.
\end{proposition}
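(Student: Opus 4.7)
The plan is to tropicalize the Lorentzian polynomial $\mathrm{Z}_{p,\operatorname{r}}$ by scaling $w_i \mapsto p^{a_i}w_i$ and letting $p \to 0^+$, extracting from each $a \in \mathbb{R}^n$ an argmax subset of $2^{[n]}$ that is forced to be $\mathrm{M}^\natural$-convex; choosing $a$ carefully then contradicts the failure of either of the local exchange conditions (1) or (2) that characterize $\mathrm{M}^\natural$-concavity.

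First I would set up the tropical limit. Let $\operatorname{r} \in \mathscr{L}_n$ and $a \in \mathbb{R}^n$. Since $\mathrm{Z}_{p,\operatorname{r}}$ is Lorentzian for all $p \in (0,1]$ and positive diagonal scalings preserve the Lorentzian property by Theorem \ref{TheoremProperties}(2), the polynomial $\mathrm{Z}_{p,\operatorname{r}}(w_0, p^{a_1}w_1, \ldots, p^{a_n}w_n) = \sum_{S \subseteq [n]} p^{\langle a, \chi_S\rangle - \operatorname{r}(S)} w_0^{n-|S|}\prod_{i \in S}w_i$ is Lorentzian. Normalizing by $p^{M_a}$ where $M_a \vcentcolon= \max_S\bigl(\operatorname{r}(S) - \langle a, \chi_S\rangle\bigr)$ and taking $p \to 0^+$, which stays inside the Lorentzian cone as the class is closed under limits, yields the Lorentzian polynomial
\[
h_a \vcentcolon= \sum_{S \in \mathscr{A}_a} w_0^{n-|S|}\prod_{i \in S}w_i, \qquad \mathscr{A}_a \vcentcolon= \arg\max_{S \subseteq [n]}\bigl(\operatorname{r}(S) - \langle a, \chi_S\rangle\bigr).
\]
Theorem \ref{TheoremCharacterization} then forces the support of $h_a$ in $\mathbb{N}^{n+1}$ to be $\mathrm{M}$-convex, which is precisely the statement that $\mathscr{A}_a \subseteq 2^{[n]}$ is $\mathrm{M}^\natural$-convex.

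Next I would verify conditions (1) and (2) by tuning $a$ to isolate purported counterexamples as argmax sets. For (1), suppose toward contradiction that $\operatorname{r}(S) + \operatorname{r}(S \cup i \cup j) > \operatorname{r}(S \cup i) + \operatorname{r}(S \cup j)$ for some $S$ and distinct $i,j \notin S$. Set $a_k = -N$ for $k \in S$ and $a_k = +N$ for $k \notin S \cup \{i,j\}$ with $N \gg 0$, which forces $\mathscr{A}_a \subseteq \{S, S \cup i, S \cup j, S \cup i \cup j\}$; then choose $(a_i, a_j)$ on the line $a_i + a_j = \operatorname{r}(S \cup i \cup j) - \operatorname{r}(S)$ with $a_i > \operatorname{r}(S \cup i) - \operatorname{r}(S)$ and $a_j > \operatorname{r}(S \cup j) - \operatorname{r}(S)$, which is solvable precisely because of the strict inequality witnessing failure of (1). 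The resulting $\mathscr{A}_a = \{S, S \cup i \cup j\}$ has a lift in $\mathbb{N}^{n+1}$ violating the $\mathrm{M}$-exchange axiom, since exchanging the $w_0$-coordinate with $w_i$ or $w_j$ would require $S \cup i$ or $S \cup j$ in $\mathscr{A}_a$, contradicting the previous paragraph. For (2), assume failure at some $S$ and distinct $i,j,k \notin S$, with the maximum of the three pair sums attained uniquely, say by $\operatorname{r}(S \cup j \cup k) + \operatorname{r}(S \cup i)$. The analogous $\pm N$ device restricts $\mathscr{A}_a$ to $\{T : S \subseteq T \subseteq S \cup \{i,j,k\}\}$, and a tuning of $(a_i, a_j, a_k)$ leveraging the uniqueness of the maximum isolates $\mathscr{A}_a = \{S \cup i, S \cup j \cup k\}$; its two-element lift again fails $\mathrm{M}$-convexity, giving the desired contradiction.

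Finally, the "in particular" statement is immediate: condition (1) is the local submodular inequality, and it implies the global submodular inequality $\operatorname{r}(A) + \operatorname{r}(B) \ge \operatorname{r}(A \cup B) + \operatorname{r}(A \cap B)$ for all $A, B \subseteq [n]$ by the standard induction on $|A \triangle B|$. The most delicate step will be the verification of (2): the system of inequalities needed to suppress the six non-targeted subsets of $\{i,j,k\}$ strictly below the two targeted elements must be shown to be solvable, with the slack provided exactly by the strict inequalities defining the uniqueness of the maximum. Once this linear feasibility check is carried out, the rest of the argument is routine.
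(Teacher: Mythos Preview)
Your approach is correct and genuinely different from the paper's. The paper never takes a tropical limit; instead it invokes two black boxes from \cite{BH19}: first, a characterization stating that $\operatorname{r}$ is $\mathrm{M}^\natural$-concave if and only if the normalized polynomial $\sum_{S}\frac{1}{(n-|S|)!}\,p^{-\operatorname{r}(S)}w_0^{n-|S|}\prod_{i\in S}w_i$ is Lorentzian for all $0<p\le 1$; second, the fact that the linear operator $w_0^k\prod_{i\in S}w_i\mapsto \frac{1}{k!}\,w_0^k\prod_{i\in S}w_i$ preserves the Lorentzian property. Composing these immediately gives the result. Your route is more elementary in that it uses only closure of Lorentzian polynomials under diagonal scaling and limits together with the $\mathrm{M}$-convex support condition from Theorem~\ref{TheoremCharacterization}, at the cost of re-deriving, in effect, the standard equivalence ``$\operatorname{r}$ is $\mathrm{M}^\natural$-concave $\Leftrightarrow$ every set $\arg\max_S(\operatorname{r}(S)-\langle a,\chi_S\rangle)$ is $\mathrm{M}^\natural$-convex'' from the Fujishige--Yang local conditions. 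The paper's argument is shorter and sits entirely inside the Lorentzian framework; yours avoids the nontrivial normalization operator and is closer in spirit to tropical geometry.

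One caution about your feasibility claim in case~(2). Suppressing all six non-targeted subsets of $S\cup\{i,j,k\}$ requires, among other things, the strict inequality $\operatorname{r}(S)+\operatorname{r}(S\cup ijk)<\operatorname{r}(S\cup i)+\operatorname{r}(S\cup jk)$, which does not follow from the failure of~(2) alone. It does, however, follow once you have already established~(1): combining the submodularity $\operatorname{r}(S\cup j)+\operatorname{r}(S\cup ik)\ge\operatorname{r}(S)+\operatorname{r}(S\cup ijk)$ with the strict hypothesis $\operatorname{r}(S\cup i)+\operatorname{r}(S\cup jk)>\operatorname{r}(S\cup j)+\operatorname{r}(S\cup ik)$ gives exactly the needed strictness, and similar combinations of~(1) with the two strict hypotheses handle the remaining cross-constraints. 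So your linear system is indeed solvable, but you should make explicit that the argument for~(2) uses~(1), not just the uniqueness of the maximum.
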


 Propositions \ref{PropositionFlagMatroid} and \ref{PropositionConcave} together imply that
nonnegative linear combinations of the rank functions of the constituents of a flag matroid are $\mathrm{M}^\natural$-concave.
This recovers a theorem of Shioura \cite[Theorem 3]{Shi12}.

\begin{proof}
A function $\operatorname{r}:2^{[n]} \to \mathbb{R}$ is $\mathrm{M}^\natural$-concave if and only if its homogenization is an $\mathrm{M}$-concave function on $\mathbb{N}^n$ \cite[Chapter 6]{Mur03}.
Therefore, by \cite[Section 8]{BH19},  the function $\operatorname{r}$ is $\mathrm{M}^\natural$-concave if and only if
\[
\text{$\sum_{S \subseteq E} \frac{1}{|n-|S||!} \ p^{-\operatorname{r}(S)}   w_0^{n-|S|} \prod_{i \in S} w_i$ is Lorentzian for any positive $p \le 1$.}
\]
By \cite[Section 6]{BH19}, we know that the linear operator 
\[
w_0^k \hspace{0.5mm}\prod_{i \in S} w_i \longmapsto \frac{1}{k!} \hspace{0.5mm}\ w_0^k\hspace{0.5mm} \prod_{i \in S} w_i
\]
preserves the Lorentzian property. 
Therefore, the $\mathrm{M}^\natural$-concavity of $\operatorname{r}$ follows from the condition
\[
\text{$\sum_{S \subseteq E}  p^{-\operatorname{r}(S)}   w_0^{n-|S|} \prod_{i \in S} w_i$ is Lorentzian for any positive $p \le 1$.} \qedhere
\]
\end{proof}


\subsection{}

Let $\mathbb{F}$ be an algebraically close field, and let
$h \in \mathbb{R}[w_0,w_1,\ldots,w_n]$ be a homogeneous polynomial of degree $d$.
We say that $h$ is a \emph{volume polynomial over $\mathbb{F}$}
if there are
nef divisors $\mathrm{H}_0,\mathrm{H}_1,\ldots,\mathrm{H}_n$ on a $d$-dimensional irreducible projective variety $Y$ over $\mathbb{F}$
that satisfy
\[
h=(w_0\mathrm{H}_0+w_1\mathrm{H}_1+\cdots+w_n\mathrm{H}_n)^d,
\]
where the intersection product of $Y$ is used to expand the right-hand side.\footnote{For nef divisors and intersection products, see \cite[Chapter 1]{Laz04}.}
Volume polynomials over $\mathbb{F}$ are prototypical examples of Lorentzian polynomials \cite[Section 10]{BH19}.

Let $\mathscr{R}_\mathbb{F}:\mathsf{Mat}(\mathbb{F}) \to \mathsf{Mat}$ be the functor
in Remark \ref{Remark:Linear}, and let $f$ be any morphism in $\mathsf{Mat}(\mathbb{F})$.
Corollary \ref{CorollaryBasisGenerating} shows that the homogeneous basis generating polynomial of 
$\mathscr{R}_\mathbb{F}(f)$ is a Lorentzian polynomial.

\begin{conjecture}\label{ConjectureRealization}
The homogeneous basis generating polynomial of $\mathscr{R}_\mathbb{F}(f)$ is a volume polynomial over $\mathbb{F}$.
\end{conjecture}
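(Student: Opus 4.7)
The plan is to generalize the geometric construction of \cite[Section 10]{BH19}, which realizes the basis generating polynomial of a representable matroid as an intersection product of nef classes on a torus-orbit closure in a Grassmannian, to the flag of Higgs lifts associated to $\mathscr{R}_\mathbb{F}(f)$. First I would reduce to the case of a matroid quotient: given a morphism $f:\varphi_1 \to \varphi_2$ in $\mathsf{Mat}(\mathbb{F})$, the composition $\varphi_2 \circ f : E_1 \to W_2$ represents the matroid $f^{-1}(\mathrm{N})$ over $\mathbb{F}$, and by the lemma in Section \ref{SectionMorphism} the basis generating polynomial is unchanged when we pass to the realized matroid quotient $\mathrm{M} \twoheadrightarrow f^{-1}(\mathrm{N})$, which is presented by a flag of linear surjections $\mathbb{F}^{E_1} \twoheadrightarrow V_1 \twoheadrightarrow V_2$ with $V_i \vcentcolon= \operatorname{span}(\varphi_i) \subseteq W_i$.

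Next I would interpolate this two-step flag into a complete flag of quotients. Writing $n = |E_1|$ and $r_i = \dim V_i$, choose a generic intermediate chain $V_1 = U_{r_1} \twoheadrightarrow U_{r_1-1} \twoheadrightarrow \cdots \twoheadrightarrow U_{r_2} = V_2$ with $\dim U_k = k$. A dimension count shows that $\mathbb{F}^{E_1} \twoheadrightarrow U_k$ represents the $k$-th Higgs lift of the quotient: a $k$-subset is a basis of this matroid iff its span in $V_1$ is $k$-dimensional and avoids $\ker(V_1 \to U_k)$, which by genericity is equivalent to being independent in $\mathrm{M}$ and spanning in $f^{-1}(\mathrm{N})$. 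Thus the flag matroid $(H_{r_2}, \ldots, H_{r_1})$ is realized over $\mathbb{F}$, and its bases partition $\mathscr{B}(f)$ by cardinality. The data defines a point $x$ in the partial flag variety $\mathrm{Fl} \vcentcolon= \operatorname{Fl}(r_2, r_2+1, \ldots, r_1; n)$, and I would form the torus-orbit closure $Y_0 \vcentcolon= \overline{T \cdot x}$ under the diagonal action of $T = \mathbb{G}_m^n$; this is an irreducible projective variety of dimension $n-1$. To match the homogeneous degree $n$ of $\mathrm{B}_f$, take $Y \vcentcolon= Y_0 \times \mathbb{P}^1_\mathbb{F}$. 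On each Grassmannian factor of $\mathrm{Fl}$, the pull-back of $\mathcal{O}(1)$ from the Pl\"ucker embedding is globally generated; restricting to $Y_0$ and decomposing torus-equivariantly into effective summands indexed by $i \in [n]$ yields nef classes $\mathrm{H}_1, \ldots, \mathrm{H}_n$, and letting $\mathrm{H}_0$ be the pull-back of $\mathcal{O}_{\mathbb{P}^1}(1)$ produces the candidate nef divisor classes on $Y$.

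The hard part will be verifying the intersection identity
\[
\bigl(w_0 \mathrm{H}_0 + w_1 \mathrm{H}_1 + \cdots + w_n \mathrm{H}_n\bigr)^n = \mathrm{B}_f(w_0, w_1, \ldots, w_n).
\]
The expected mechanism is torus localization on (the normalization of) $Y_0$, which should be the toric variety of the Bergman fan of the flag matroid $(H_{r_2}, \ldots, H_{r_1})$; its $T$-fixed points correspond via a Gelfand--MacPherson-type dictionary to coordinate flags compatible with the realization, hence to bases of the flag matroid graded by cardinality, hence to $\mathscr{B}(f)$. Assembling the fixed-point contributions to yield exactly the indicator of $\mathscr{B}(f)$ is the principal combinatorial--geometric obstacle, structurally analogous to but substantively more intricate than the single-Grassmannian computation in the matroid case. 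Two further technical subtleties compound the difficulty: $Y_0$ is typically non-normal, so one must work on its normalization before drawing intersection-theoretic conclusions; and one must verify that the equivariantly decomposed summands $\mathrm{H}_i$ are genuinely nef as non-equivariant divisor classes on $Y$, not merely equivariantly effective.
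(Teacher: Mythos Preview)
The statement is a \emph{Conjecture} in the paper and is not proved there. The paper's only remark is that the construction of \cite[Section 4]{HW17} verifies the special case where $f$ is an identity morphism; nothing is claimed for general morphisms. So there is no proof in the paper to compare against, and your proposal is an outline toward an open problem rather than a proof.

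Beyond the obstacles you already flag, your outline has a concrete structural gap. You take $\mathrm{H}_0$ to be the pull-back of $\mathcal{O}_{\mathbb{P}^1}(1)$ from a single $\mathbb{P}^1$ factor, so $\mathrm{H}_0^2 = 0$ on $Y$, and hence your volume polynomial is at most linear in $w_0$. But $\mathrm{B}_f$ contains monomials with $w_0$-exponent $n - |S|$ for every $S \in \mathscr{B}(f)$, and these exponents range from $n - r_1$ up to $n - r_2$; whenever $r_2 \le n - 2$ the polynomial has terms divisible by $w_0^2$, which your construction cannot produce. The related claim that $\dim Y_0 = n - 1$ is also unjustified: the torus-orbit closure in a partial flag variety has dimension equal to that of the flag matroid polytope, which is $n$ minus the number of connected components of the flag matroid and can be strictly smaller than $n-1$ (for instance when $\mathrm{M}=\mathrm{N}$ is disconnected, so there is no interpolation at all). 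Any correct construction must replace the single $\mathbb{P}^1$ by a factor large enough to carry the full range of $w_0$-degrees, after which the coefficient matching and the nefness verification (for which you offer no analogue of the base-point-free argument in \cite{HW17}) become correspondingly harder.
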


Let $\varphi:E\to W$ be any object in $\mathsf{Mat}(\mathbb{F})$.
In \cite[Section 4]{HW17}, the authors construct a collection of nef divisors $(\mathrm{H}_i)_{i \in E}$
on an irreducible projective variety $Y$ such that
\[
\sum_{S \in \mathscr{B}(\mathrm{M}(\varphi))} \prod_{i \in S} w_i =\Big(\sum_{i \in E} w_i\mathrm{H}_i\Big)^{\dim Y}.
\]
The construction can be used to verify Conjecture \ref{ConjectureRealization} when $f$ is the identity morphism of $\varphi$.

\begin{bibdiv}
\begin{biblist}

\bib{AOGV18}{article}{
  author = {Anari, Nima},
  author = {Oveis Gharan, Shayan},
  author = {Vinzant, Cynthia}
  title     = {Log-concave polynomials I: entropy, and a deterministic approximation
              algorithm for counting bases of matroids},
  url       = {http://arxiv.org/abs/1807.00929},
  archivePrefix = {arXiv},
 eprint    = {arXiv:1807.00929},
  	date={2018}
}

\bib{ALOGV18a}{article}{
  author = {Anari, Nima},
  author = {Liu, Kuikui},
  author = {Oveis Gharan, Shayan},
  author = {Vinzant, Cynthia},
  title     = {Log-Concave polynomials II: High-Dimensional Walks and an FPRAS for Counting Bases of a Matroid},
  url       = {https://arxiv.org/abs/1811.01816},
  archivePrefix = {arXiv},
  eprint    = {arXiv:1811.01816}
  	date={2018}
}

\bib{ALOGV18b}{article}{
  author = {Anari, Nima},
  author = {Liu, Kuikui},
  author = {Oveis Gharan, Shayan},
  author = {Vinzant, Cynthia},
  title     = {Log-Concave polynomials III: Mason's ultra-log-concavity conjecture for independent sets of matroids},
  url       = {https://arxiv.org/abs/1811.01600},
  archivePrefix = {arXiv},
  eprint    = {arXiv:1811.01600},
  	date={2018}
}

\bib{BH18}{article}{
	author={Br\"and\'en, Petter},
	author={Huh, June},
	title={Hodge-Riemann relations for Potts model partition functions},
	eprint={arXiv:1811.01696},
	date={2018}
}

\bib{BH19}{article}{
	author={Br\"and\'en, Petter},
	author={Huh, June},
	title={Lorentzian polynomials},
	eprint={arXiv:1902.03719},
	date={2019}
}

\bib{Bry86}{article}{
   author={Brylawski, Thomas},
   title={Constructions},
   conference={
      title={Theory of matroids},
   },
   book={
      series={Encyclopedia Math. Appl.},
      volume={26},
      publisher={Cambridge Univ. Press, Cambridge},
   },
   date={1986},
   pages={127--223},
}

\bib{BGW03}{book}{
   author={Borovik, Alexandre V.},
   author={Gelfand, I. M.},
   author={White, Neil},
   title={Coxeter matroids},
   series={Progress in Mathematics},
   volume={216},
   publisher={Birkh\"{a}user Boston, Inc., Boston, MA},
   date={2003},
   pages={xxii+264},
}


\bib{CDMS18}{article}{
	author={Cameron, Amanda},
	author={Dinu, Rodica},
	author={Michalek, Mateusz},
	author={Seynnaeve, Tim},
	title={Flag matroids: algebra and geometry},
	url={https://arxiv.org/abs/1811.00272},
	eprint={arXiv:1811.00272},
	date={2018}
}


\bib{DFM18}{article}{
   author={Dupont, Cl\'{e}ment},
   author={Fink, Alex},
   author={Moci, Luca},
   title={Universal Tutte characters via combinatorial coalgebras},
   journal={Algebr. Comb.},
   volume={1},
   date={2018},
   number={5},
   pages={603--651},
}


\bib{EMM15}{article}{
   author={Ellis-Monaghan, Joanna A.},
   author={Moffatt, Iain},
   title={The Las Vergnas polynomial for embedded graphs},
   journal={European J. Combin.},
   volume={50},
   date={2015},
   pages={97--114},
}

\bib{TutteIII}{article}{
   author={Etienne, Gwihen},
   author={Las Vergnas, Michel},
   title={The Tutte polynomial of a morphism of matroids. III. Vectorial
   matroids},
   note={Special issue on the Tutte polynomial},
   journal={Adv. in Appl. Math.},
   volume={32},
   date={2004},
   number={1-2},
   pages={198--211},
}

\bib{FY03}{article}{
   author={Fujishige, Satoru},
   author={Yang, Zaifu},
   title={A note on Kelso and Crawford's gross substitutes condition},
   journal={Math. Oper. Res.},
   volume={28},
   date={2003},
   number={3},
   pages={463--469},
}

\bib{Gur09}{article}{
   author={Gurvits, Leonid},
   title={On multivariate Newton-like inequalities},
   conference={
      title={Advances in combinatorial mathematics},
   },
      book={
      publisher={Springer, Berlin},
   },
      date={2009},
         pages={61--78},
     }


\bib{HP18}{article}{
   author={Heunen, Chris},
   author={Patta, Vaia},
   title={The category of matroids},
   journal={Appl. Categ. Structures},
   volume={26},
   date={2018},
   number={2},
   pages={205--237},
}

\bib{HW17}{article}{
   author={Huh, June},
   author={Wang, Botong},
   title={Enumeration of points, lines, planes, etc},
   journal={Acta Math.},
   volume={218},
   date={2017},
   number={2},
   pages={297--317},
}

\bib{KMT18}{article}{
   author={Krajewski, Thomas},
   author={Moffatt, Iain},
   author={Tanasa, Adrian},
   title={Hopf algebras and Tutte polynomials},
   journal={Adv. in Appl. Math.},
   volume={95},
   date={2018},
   pages={271--330},
}

\bib{Kun86}{article}{
   author={Kung, Joseph P. S.},
   title={Strong maps},
   conference={
      title={Theory of matroids},
   },
   book={
      series={Encyclopedia Math. Appl.},
      volume={26},
      publisher={Cambridge Univ. Press, Cambridge},
   },
   date={1986},
   pages={224--253},
}

\bib{KN86}{article}{
   author={Kung, Joseph P. S.},
   author={Nguyen, Hien Q.},
   title={Weak maps},
   conference={
      title={Theory of matroids},
   },
   book={
      series={Encyclopedia Math. Appl.},
      volume={26},
      publisher={Cambridge Univ. Press, Cambridge},
   },
   date={1986},
   pages={254--271},
}

\bib{LV75}{article}{
   author={Las Vergnas, Michel},
   title={Extensions normales d'un matro\"{i}de, polyn\^{o}me de Tutte d'un
   morphisme},
   journal={C. R. Acad. Sci. Paris S\'{e}r. A-B},
   volume={280},
   date={1975},
   number={22},
   pages={Ai, A1479--A1482},
}

\bib{LV80}{article}{
   author={Las Vergnas, Michel},
   title={On the Tutte polynomial of a morphism of matroids},
   note={Combinatorics 79 (Proc. Colloq., Univ. Montr\'{e}al, Montreal, Que.,
   1979), Part I},
   journal={Ann. Discrete Math.},
   volume={8},
   date={1980},
   pages={7--20},
}

\bib{TutteII}{article}{
   author={Las Vergnas, Michel},
   title={The Tutte polynomial of a morphism of matroids. II. Activities of
   orientations},
   conference={
      title={Progress in graph theory},
      address={Waterloo, Ont.},
      date={1982},
   },
   book={
      publisher={Academic Press, Toronto, ON},
   },
   date={1984},
   pages={367--380},
}

\bib{TutteI}{article}{
   author={Las Vergnas, Michel},
   title={The Tutte polynomial of a morphism of matroids. I. Set-pointed
   matroids and matroid perspectives},
   note={Symposium \`a la M\'{e}moire de Fran\c{c}ois Jaeger (Grenoble, 1998)},
   journal={Ann. Inst. Fourier (Grenoble)},
   volume={49},
   date={1999},
   number={3},
   pages={973--1015},
}

\bib{TutteIV}{article}{
   author={Las Vergnas, Michel},
   title={The Tutte polynomial of a morphism of matroids. IV. Computational
   complexity},
   journal={Port. Math. (N.S.)},
   volume={64},
   date={2007},
   number={3},
   pages={303--309},
}

\bib{TutteV}{article}{
   author={Las Vergnas, Michel},
   title={The Tutte polynomial of a morphism of matroids---5. Derivatives as
   generating functions of Tutte activities},
   journal={European J. Combin.},
   volume={34},
   date={2013},
   number={8},
   pages={1390--1405},
}

\bib{Laz04}{book}{
   author={Lazarsfeld, Robert},
   title={Positivity in algebraic geometry. I},
   series={Ergebnisse der Mathematik und ihrer Grenzgebiete. 3. Folge. A
   Series of Modern Surveys in Mathematics} 
   volume={48},
   note={Classical setting: line bundles and linear series},
   publisher={Springer-Verlag, Berlin},
   date={2004},
}

\bib{Mas72}{article}{
   author={Mason, J. H.},
   title={Matroids: unimodal conjectures and Motzkin's theorem},
   conference={
      title={Combinatorics (Proc. Conf. Combinatorial Math., Math. Inst.,
      Oxford, 1972)},
   },
   book={
      publisher={Inst. Math. Appl., Southend-on-Sea},
   },
   date={1972},
   pages={207--220},
}

\bib{Mur03}{book}{
   author={Murota, Kazuo},
   title={Discrete convex analysis},
   series={SIAM Monographs on Discrete Mathematics and Applications},
   publisher={Society for Industrial and Applied Mathematics (SIAM),
   Philadelphia, PA},
   date={2003},
   pages={xxii+389},
}

\bib{Oxl11}{book}{
   author={Oxley, James},
   title={Matroid theory},
   series={Oxford Graduate Texts in Mathematics},
   volume={21},
   edition={2},
   publisher={Oxford University Press, Oxford},
   date={2011},
}

\bib{RvGP02}{article}{
   author={Reijnierse, Hans},
   author={van Gellekom, Anita},
   author={Potters, Jos A. M.},
   title={Verifying gross substitutability},
   journal={Econom. Theory},
   volume={20},
   date={2002},
   number={4},
   pages={767--776},
}

\bib{Shi12}{article}{
   author={Shioura, Akiyoshi},
   title={Matroid rank functions and discrete concavity},
   journal={Jpn. J. Ind. Appl. Math.},
   volume={29},
   date={2012},
   number={3},
   pages={535--546},
}

\bib{Sok05}{article}{
   author={Sokal, Alan D.},
   title={The multivariate Tutte polynomial (alias Potts model) for graphs
   and matroids},
   conference={
      title={Surveys in combinatorics 2005},
   },
   book={
      series={London Math. Soc. Lecture Note Ser.},
      volume={327},
      publisher={Cambridge Univ. Press, Cambridge},
   },
   date={2005},
   pages={173--226},
}

\bib{Tar85}{article}{
   author={Tardos, \'{E}va},
   title={Generalized matroids and supermodular colourings},
   conference={
      title={Matroid theory},
      address={Szeged},
      date={1982},
   },
   book={
      series={Colloq. Math. Soc. J\'{a}nos Bolyai},
      volume={40},
      publisher={North-Holland, Amsterdam},
   },
   date={1985},
   pages={359--382},
}

\bib{Wel76}{book}{
   author={Welsh, D. J. A.},
   title={Matroid theory},
   note={L. M. S. Monographs, No. 8},
   publisher={Academic Press, 
   London-New York},
   date={1976},
}

\end{biblist}
\end{bibdiv}

\end{document}